\newtheorem{lem}{Lemma}[section]
\newtheorem{thm}[lem]{Theorem}
\newtheorem{pro}[lem]{Proposition}
\newtheorem{cor}[lem]{Corollary}
\newtheorem{exa}[lem]{Example}
\newtheorem{con}[lem]{Conjecture}
\newcommand{\B}{{\mathscr{B}}}
\newcommand{\I}{{\mathscr{I}}}
\newcommand{\N}{{\textsf{N}}}
\newcommand{\E}{{\textsf{E}}}
\newcommand{\Des}{{\textsf{Des}}}
\newcommand{\des}{{\textsf{des}}}
\newcommand{\inv}{{\textsf{inv}}}
\newcommand{\lead}{{\textsf{lead}}}
\newcommand{\stat}{{\textsf{stat}}}
\newcommand{\ldes}{{\textsf{ldes}}}
\newcommand{\maj}{{\textsf{maj}}}
\newcommand{\sump}{{\textsf{sump}}}
\renewcommand{\P}{{\mathscr{P}}}
\newcommand{\F}{{\mathscr{F}}}
\newcommand{\X}{{\mathscr{X}}}
\title[Folding Phenomenon of Parity-balance of Major-balance Identities]{Folding Phenomenon of Major-balance Identities on Restricted Involutions}
\author{Tung-Shan Fu}
\address{General Education Center, National Pingtung University, Pingtung 900, Taiwan, ROC}
\email{tsfu@mail.nptu.edu.tw}
\author{Hsian-Chun Hsu}
\address{Department of Mathematics, National Taiwan Normal University, Taipei 106, Taiwan, ROC}
\email{hchsu0222@gmail.com}
\author{Hsin-Chieh Liao}
\address{Department of Mathematics, National Taiwan Normal University, Taipei 106, Taiwan, ROC}
\email{jeffliao1@gmail.com}
\begin{document}

\begin{abstract} In this paper we prove a
refined major-balance identity on the $321$-avoiding involutions of length $n$, respecting the leading element of permutations. The proof is based on a sign-reversing involution on the lattice paths within a $\lfloor\frac{n}{2}\rfloor\times\lceil\frac{n}{2}\rceil$ rectangle. Moreover, we prove affirmatively a question about refined major-balance identity on the $123$-avoiding involutions, respecting the number of descents. 
\end{abstract}

\maketitle

\section{Introduction}
Let $\mathfrak{S}_n$ be the set of permutation of $\{1,2,\dots,n\}$. A permutation $\sigma=\sigma_1\cdots\sigma_n\in\mathfrak{S}_n$ is called 321-\emph{avoiding} (123-\emph{avoiding}, respectively) if it has no decreasing (increasing, respectively) subsequence of length three. Let $\mathfrak{S}_n(321)$ ($\mathfrak{S}_n(123)$, respectively) be the set of 321-avoiding (123-avoiding, respectively) permutations in $\mathfrak{S}_n$. It is known that $|\mathfrak{S}_n(321)|=|\mathfrak{S}_n(123)|=C_n=\frac{1}{n+1}{{2n}\choose {n}}$, the $n$th Catalan number. 

\subsection{Sign-balance for restricted permutations}
The sign-balance of restricted permutations is an interesting theme in enumerative combinatorics. Simion and Schmidt  \cite{Simion-Schmidt} proved the following sign-balance property of $\mathfrak{S}_n(321)$.
\[
\sum_{\sigma\in\mathfrak{S}_n(321)} (-1)^{\inv(\sigma)} =\left\{
                \begin{array}{ll}
                C_{\frac{n-1}{2}} & \mbox{if $n$ is odd} \\
                0                 & \mbox{if $n$ is even,}
                \end{array}
         \right.
\]
where $\inv(\sigma)=\#\{(\sigma_i,\sigma_j):\sigma_i>\sigma_j$ and $i<j\}$ is the \emph{inversion number} of $\sigma$.
Adin and Roichman \cite{AdinRoichman} proved a refinement of this result, respecting the position of the last descent ($\ldes$) of $\sigma$, i.e., $\ldes(\sigma)=\mathrm{max}\{i:\sigma_i>\sigma_{i+1}$ and $1\le i\le n-1\}$.

\begin{thm}[{\bf Adin-Roichman}] For all $n\ge 1$, the following identities hold.
\begin{enumerate}
\item ${\displaystyle
\sum_{\sigma\in\mathfrak{S}_{2n+1}(321)} (-1)^{\inv(\sigma)}q^{\ldes(\sigma)} = \sum_{\sigma\in\mathfrak{S}_n(321)} q^{2\cdot\ldes(\sigma)} 
}$
\item ${\displaystyle
\sum_{\sigma\in\mathfrak{S}_{2n}(321)} (-1)^{\inv(\sigma)}q^{\ldes(\sigma)} = (1-q)\sum_{\sigma\in\mathfrak{S}_n(321)} q^{2\cdot\ldes(\sigma)}.
}$
\end{enumerate}
\end{thm}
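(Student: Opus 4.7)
The plan is to translate the signed sum $\sum_{\sigma \in \mathfrak{S}_n(321)} (-1)^{\inv(\sigma)} q^{\ldes(\sigma)}$ into a signed sum over lattice paths and then exhibit a sign-reversing involution that preserves the statistic encoding $\ldes$. The starting point is a standard bijection $\Phi \colon \mathfrak{S}_n(321) \to \mathcal{P}_n$, where $\mathcal{P}_n$ is the set of north/east lattice paths in a rectangle of shape roughly $\lfloor n/2 \rfloor \times \lceil n/2 \rceil$ (equivalently, Dyck paths of semilength $n$). Under $\Phi$, the inversion number $\inv(\sigma)$ corresponds to an area-type statistic on the path whose parity can be tracked locally, while $\ldes(\sigma)$ is recorded by the horizontal coordinate of a distinguished path feature such as the last east-to-north corner.

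Next, I would construct an involution $\iota$ on $\mathcal{P}_n$ that toggles a canonical adjacent pair of opposite-type steps lying strictly to the left of the feature recording $\ldes$. Such a toggle flips the parity of the area statistic, thereby reversing $(-1)^{\inv(\sigma)}$, while leaving the $\ldes$-encoding feature (and hence $\ldes(\sigma)$) unchanged. The fixed points of $\iota$ are exactly those paths admitting no such toggleable pair, which forces their initial portion into a rigid staircase/alternating shape.

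I would then verify that the set of fixed points is in bijection with $\mathcal{P}_m$, where $m = \lfloor n/2 \rfloor$, in a way that doubles the coordinate recording $\ldes$. This immediately yields identity (i) when $n = 2m+1$. For the even case $n = 2m$, the fixed points split naturally into two classes according to a final local configuration, contributing $q^{2\ldes}$ and $-q^{2\ldes+1}$ respectively, and combining them produces the factor $(1-q)$ in identity (ii).

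The main obstacle will be arranging $\iota$ so that it simultaneously reverses the parity of $\inv$ and preserves $\ldes$: both statistics depend globally on the path, and a naive local swap generally disturbs both. The resolution is to restrict the operating region of $\iota$ by the exact location of the $\ldes$-encoding feature, so that the toggle is invisible to $\ldes$. Once this is in place, identifying the fixed-point set and verifying the doubling of $\ldes$ should be a direct inspection, with the odd/even dichotomy emerging from the shape of the last column of the rectangle.
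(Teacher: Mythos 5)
This statement is not proved in the paper at all: it is quoted in the introduction as a known theorem of Adin and Roichman \cite{AdinRoichman}, so there is no internal proof to compare against; the paper's own sign-reversing-involution machinery is developed only for the involution classes $\I_n(321)$ and the statistic $\lead$, not for all of $\mathfrak{S}_n(321)$ with $\ldes$. Judged on its own merits, your proposal is a strategy outline rather than a proof, and it has a concrete flaw at the very first step. You take as your path model ``north/east lattice paths in a rectangle of shape roughly $\lfloor n/2\rfloor\times\lceil n/2\rceil$ (equivalently, Dyck paths of semilength $n$)''; these two sets are not equivalent. Paths in that rectangle number $\binom{n}{\lfloor n/2\rfloor}$ and model the \emph{involutions} $\I_n(321)$ (this is exactly the bijection $\xi\circ\delta$ used in the paper), whereas $\mathfrak{S}_n(321)$ has $C_n$ elements and must be modeled by Dyck paths of semilength $n$. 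Since the theorem concerns all $321$-avoiding permutations, starting from the rectangle model cannot work, and the rest of the construction inherits this ambiguity.

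Beyond that, every step that carries the actual content is asserted rather than established: you do not specify the bijection $\Phi$, nor prove that $\inv$ goes to a statistic whose parity is flipped by your toggle, nor that $\ldes$ is read off a path feature that the toggle provably leaves intact (you yourself identify this compatibility as ``the main obstacle,'' and the proposed resolution --- ``restrict the operating region'' --- is exactly the point that needs a construction and a proof, e.g.\ that a toggle to the left cannot create a new last descent or move it). Likewise, the characterization of the fixed points, the bijection doubling the $\ldes$-recording coordinate onto paths of half size, and the splitting of fixed points into two classes producing the factor $(1-q)$ in the even case are all stated without verification, and the $(1-q)$ bookkeeping is precisely where such arguments usually break if the fixed-point set is not pinned down exactly. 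As it stands, the proposal is a plausible plan in the spirit of the paper's Section 4 (sign-reversing involutions on lattice paths with fixed points given by step-doubled half-size paths), but it does not yet constitute a proof of either identity.
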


Later, Reifegerste \cite{Reifegerste} proved an analogous refinement for another permutation statistic, the
length of the longest increasing subsequence of the permutations. Eu et al.  \cite{EFPT-Alt} turned to other families of restricted permutations and obtained refined sign-balance result for 321-avoiding alternating permutations,
respecting the leading element and the last element of  permutations, respectively.

These results share a folding phenomenon that with respect to a certain statistic the sign-balance generating function for restricted permutations of length $2n$ essentially equals the ordinary generating function for the permutations of length $n$.
Eu et al. \cite{EFPT-In321} described the folding phenomenon in the framework
\[
\sum_{\pi\in\X_{2n}\mbox{ {\tiny  or} }\X_{2n+1}} (-1)^{\stat_1(\pi)}q^{\stat_2(\pi)}=f(q)\sum_{\pi\in\X_n} q^{2\cdot\stat_2(\pi)},
\]
where $\X_n$ is a family of combinatorial objects of size $n$ with statistics $\stat_1$ and $\stat_2$, and $f(q)$ is a rational function.
In this paper, we present an instance of such a phenomenon on 321-avoiding involutions.

\subsection{Major-balance for 321-avoiding involutions}
The {\em descent set} of $\sigma$ is defined as $\Des(\sigma)=\{i:\sigma_i>\sigma_{i+1}, 1\le i\le n-1\}$,
and the {\em descent number} ($\des$) and {\em major index} ($\maj$) of $\sigma$ are defined by
\[
\des(\sigma)= \sum_{i\in\Des(\sigma)} 1\quad\mbox{and}\quad\maj(\sigma) = \sum_{i\in\Des(\sigma)} i.
\]
Recall that a permutation $\sigma$ is called an \emph{involution} if and only if $\sigma^{-1}=\sigma$. Let $\I_n(321)$ ($\I_n(123)$, respectively) be the set of involutions in $\mathfrak{S}_n(321)$ ($\mathfrak{S}_n(123)$, respectively).
Simion and Schmidt \cite{Simion-Schmidt} proved that
$|\I_n(321)|=|\I_n(123)|={{n}\choose{n/2}}$.
Recently, Eu et al. \cite{EFPT-In321} proved the following refined major-balance result on 321-avoiding involutions.

\begin{thm}[{\bf Eu-Fu-Pan-Ting}] \label{thm:ARM-identity-for-InA321} For all $n\ge 1$, the following identities hold.
\begin{enumerate}
\item ${\displaystyle
\sum_{\sigma\in\I_{4n}(321)} (-1)^{\maj(\sigma)}q^{\des(\sigma)} =\sum_{\sigma\in\I_{2n}(321)}q^{2\cdot\des(\sigma)},
}$
\item ${\displaystyle
\sum_{\sigma\in\I_{4n+2}(321)} (-1)^{\maj(\sigma)}q^{\des(\sigma)} =(1-q)\sum_{\sigma\in\I_{2n}(321)}q^{2\cdot\des(\sigma)},
}$
\item ${\displaystyle
\sum_{\sigma\in\I_{2n+1}(321)} (-1)^{\maj(\sigma)}q^{\des(\sigma)}=\sum_{\sigma\in\I_{n}(321)}q^{2\cdot\des(\sigma)}.
}$
\end{enumerate}
\end{thm}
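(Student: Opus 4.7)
The plan is to realize $\I_n(321)$ as a set of lattice paths in a rectangle and to construct a sign-reversing, $\des$-preserving involution whose fixed points match the right-hand side of each identity. Since $|\I_n(321)|=\binom{n}{\lfloor n/2\rfloor}$, there is a natural bijection $\phi$ between $\I_n(321)$ and the set $\mathcal{L}_n$ of lattice paths from $(0,0)$ to $(\lfloor n/2\rfloor,\lceil n/2\rceil)$ using unit east (E) and north (N) steps. First I would pin down $\phi$ explicitly---working, for instance, from the canonical two-line form of $\sigma$ or from its RSK image in the set of two-row standard Young tableaux---and then verify that $\des(\sigma)$ equals the number of $\mathrm{EN}$-corners (valleys) of $\phi(\sigma)$ and that $\maj(\sigma)$ equals the sum of step-indices at which those valleys occur, when the $n$ steps of the path are labeled $1,\dots,n$ in order.

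With the statistics translated, the heart of the proof is a sign-reversing involution $\iota\colon\mathcal{L}_n\to\mathcal{L}_n$ that preserves the number of valleys but flips the parity of the sum of valley indices. A natural candidate scans each path for its leftmost ``moveable'' configuration---a pair of adjacent steps whose toggle $\mathrm{EN}\leftrightarrow\mathrm{NE}$ shifts a single valley by one unit in position; such a move changes $\maj$ by $\pm 1$ and leaves $\des$ unchanged. After pairing up the non-fixed paths, the surviving fixed points should be precisely those paths whose step sequence splits into consecutive pairs of identical directions. Contracting each such pair to a single step sends the fixed path to a lattice path in $\mathcal{L}_{\lfloor n/2\rfloor}$, and each valley in the contracted path corresponds to a ``doubled'' valley upstairs whose index-contribution to $\maj$ is doubled, yielding the factor $q^{2\cdot\des}$ on the right-hand side of each identity.

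The three parity cases are handled uniformly by this scheme and differ only in the boundary bookkeeping of the rectangle: for $n=4m$ the $2m\times 2m$ rectangle folds perfectly onto the $m\times m$ rectangle of $\I_{2m}(321)$, giving identity (i); for $n=4m+2$ the $(2m+1)\times(2m+1)$ rectangle has an unmatched row and column whose final corner may or may not be a valley, which introduces the prefactor $(1-q)$ of identity (ii); and for $n=2m+1$ the single unmatched step is forced to be a fixed point of $\iota$, leaving $\mathcal{L}_m$ after contraction and giving identity (iii). The hard part, I expect, is the construction of $\iota$ itself: reversing the parity of $\maj$ while preserving $\des$ forces $\iota$ to \emph{shift} valleys rather than create or destroy them, and pinning down a canonical shift that is truly involutive---in particular, whose fixed locus equals the set of ``doubled-pair'' paths exactly---will require a careful case analysis by the parity of the step index at which the leftmost moveable configuration appears and by whether that shift is blocked by a neighboring valley.
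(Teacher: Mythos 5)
Your overall frame (realize $\I_n(321)$ as lattice paths in a $\lfloor\frac{n}{2}\rfloor\times\lceil\frac{n}{2}\rceil$ rectangle, translate $\maj$ and $\des$ into corner statistics, then build a sign-reversing involution) is the right kind of strategy, and indeed this is the setting used elsewhere in the paper; note, though, that the paper does not prove this theorem at all — it is quoted from Eu--Fu--Pan--Ting — and the involutions $\Phi_1,\dots,\Phi_4$ constructed in the paper preserve the leading segment (hence $\lead$) but \emph{not} $\des$, so they cannot simply be transplanted. Your proposal, however, has two genuine gaps. First, the central object — a $\des$-preserving, $\maj$-parity-reversing involution — is never constructed; you explicitly defer ``the hard part.'' Worse, the local move you suggest, toggling $\E\N\leftrightarrow\N\E$, does not in general preserve $\des$: for example $\E\E\N\N\mapsto\E\N\E\N$ turns one corner into three, changing the number of peaks/valleys. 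Moves that genuinely \emph{shift} a corner past a run (e.g.\ $\N\E\E\leftrightarrow\E\N\E$) behave better, but then one must still specify a canonical choice, prove involutivity, and control the fixed locus — none of which is done.

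Second, the fixed-point analysis is wrong as stated. If the fixed points were exactly the step-doubled paths, then contracting each doubled pair would \emph{preserve} the number of corners, so the surviving sum would be $\sum q^{\des(\omega)}$, not the required $\sum q^{2\cdot\des(\omega)}$: doubling a path doubles $\maj$ (each corner's position doubles) but does not double $\des$. A small check already refutes the claim: for $n=1$, the six paths in $\B(2,2)$ correspond to $\I_4(321)$, and the terms that must survive are those of $1234$ ($\des=0$) and $2143$ ($\des=2$, $\maj=4$), i.e.\ the paths $\E\E\N\N$ and $\N\E\N\E$; the latter is not a doubled path, while the doubled path $\N\N\E\E$ (coming from $3412$, with $\des=1$, $\maj=2$) must \emph{cancel} against a $\des=1$, odd-$\maj$ partner. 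So the correct fixed points are paths in which each corner of the half-size path is itself ``doubled'' (a peak becoming a $\N\E\N\E$-type configuration), not paths with doubled steps, and identifying such a fixed set together with a compatible involution is precisely the content that is missing. The parity bookkeeping you sketch for the $(1-q)$ factor in (ii) and for (iii) inherits the same problem and is likewise unsubstantiated.
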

Meanwhile, they asked a question about an analogous result for 123-avoiding involutions.

\begin{con}[{\bf Eu-Fu-Pan-Ting}] \label{con:max-des-identity-for-InA123} For all $n\ge 1$, the following identities hold.
\begin{enumerate}
\item ${\displaystyle
\sum_{\sigma\in\I_{4n}(123)} (-1)^{\maj(\sigma)}q^{\des(\sigma)} =q\sum_{\sigma\in\I_{2n}(123)}q^{2\cdot\des(\sigma)},
}$
\item ${\displaystyle
\sum_{\sigma\in\I_{4n+2}(123)} (-1)^{\maj(\sigma)}q^{\des(\sigma)} =(1-q)q^2\sum_{\sigma\in\I_{2n}(123)}q^{2\cdot\des(\sigma)},
}$
\item ${\displaystyle
\sum_{\sigma\in\I_{2n+1}(123)} (-1)^{\maj(\sigma)}q^{\des(\sigma)}=(-1)^nq^2\sum_{\sigma\in\I_{n}(123)}q^{2\cdot\des(\sigma)}.
}$
\end{enumerate}
\end{con}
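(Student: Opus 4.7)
The plan is to deduce Conjecture 1.4 directly from Theorem 1.2 via a bijection $\phi\colon\I_n(321)\to\I_n(123)$ under which the descent set of $\sigma$ and the descent set of $\phi(\sigma)$ partition $\{1,\ldots,n-1\}$. Such a bijection arises from the RSK correspondence: the map $\sigma\mapsto P(\sigma)$ (where $P(\sigma)$ denotes the insertion tableau, which equals the recording tableau because $\sigma$ is an involution) is a bijection from involutions in $\mathfrak{S}_n$ onto standard Young tableaux of size $n$, and restricts to bijections of $\I_n(321)$ with SYT whose shape has at most two rows and of $\I_n(123)$ with SYT whose shape has at most two columns. Composing with tableau transposition $P\mapsto P^t$ then yields $\phi$.

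To translate statistics, I would encode both $P(\sigma)$ and $P(\sigma)^t$ by the common ballot sequence $w=w_1\cdots w_n\in\{1,2\}^n$, where $w_i=1$ (respectively, $w_i=2$) means that $i$ lies in row $1$ of $P(\sigma)$, equivalently in column $1$ of $P(\sigma)^t$; the ballot condition requires every prefix of $w$ to contain at least as many $1$'s as $2$'s. Using the standard fact that $i\in\Des(\pi)$ iff $i+1$ is strictly below $i$ in $P(\pi)$, we read $\Des(\sigma)=\{i:w_iw_{i+1}=12\}$ directly. For $\Des(\phi(\sigma))$, the row-comparison in $P(\sigma)^t$ becomes a column-comparison in $P(\sigma)$, and a case analysis over the four possibilities for $(w_i,w_{i+1})$ --- exploiting the fact that the $12$ step forces a strict inequality between the counts of $1$'s and $2$'s in $w_1\cdots w_i$, while each of the $21$, $11$, $22$ steps individually guarantees a descent --- yields $\Des(\phi(\sigma))=\{1,\ldots,n-1\}\setminus\Des(\sigma)$. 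Consequently
\[
\des(\phi(\sigma))=n-1-\des(\sigma),\qquad\maj(\phi(\sigma))=\tbinom{n}{2}-\maj(\sigma).
\]

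Substituting these into the generating functions over $\I_n(123)$ gives
\[
\sum_{\tau\in\I_n(123)}(-1)^{\maj(\tau)}q^{\des(\tau)}=(-1)^{\binom{n}{2}}q^{n-1}\sum_{\sigma\in\I_n(321)}(-1)^{\maj(\sigma)}q^{-\des(\sigma)}
\]
and similarly $\sum_{\tau\in\I_n(123)}q^{2\des(\tau)}=q^{2n-2}\sum_{\sigma\in\I_n(321)}q^{-2\des(\sigma)}$. Replacing $q$ by $q^{-1}$ in Theorem 1.2 and substituting into the first display for $n\in\{4m,4m+2,2m+1\}$, the parity calculations $\binom{4m}{2}\equiv 0$, $\binom{4m+2}{2}\equiv 1$, and $\binom{2m+1}{2}\equiv m\pmod{2}$ yield the signs $+1$, $-1$, and $(-1)^m$, while the factor $q^{n-1}$ supplies precisely the prefactors $q$ in part (i) and $q^2$ in parts (ii) and (iii) after cancelling against the normalisation $q^{2n-2}$ coming from the second display. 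The main obstacle is the four-case descent-complementarity argument of the middle paragraph: one must verify, using the ballot inequalities at both steps $i$ and $i+1$, that $w_iw_{i+1}=12$ is the unique pattern that creates a descent in $\sigma$ and fails to create one in $\phi(\sigma)$. Once this is in place, the reduction to Theorem 1.2 becomes a matter of routine substitution and parity bookkeeping.
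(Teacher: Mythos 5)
Your proposal is correct and follows essentially the same route as the paper: the RSK/transposition bijection between $\I_n(321)$ and $\I_n(123)$, the complementation $\Des(\sigma^{\mathtt T})=\{1,\dots,n-1\}\setminus\Des(\sigma)$ (your ballot-sequence case analysis is just a repackaging of the paper's Lemma \ref{lem: Q-transpose}), and the reduction to Theorem \ref{thm:ARM-identity-for-InA321} via $\des\mapsto n-1-\des$, $\maj\mapsto\binom{n}{2}-\maj$ with the same parity and prefactor bookkeeping.
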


\smallskip
\subsection{Our work}  For a permutation $\sigma=\sigma_1\cdots\sigma_n\in\mathfrak{S}_n$, let $\lead(\sigma)$ denote the first element of $\sigma$, i.e., $\lead(\sigma)=\sigma_1$.  Recall that the $q$-binomial coefficients are polynomials defined as
\[
{{n}\brack{k}}_q:=\frac{[n]!_q}{[k]!_q[n-k]!_q},
\]
where
$[n]!_q=[1]_q[2]_q\cdots[n]_q$ and $[i]_q=1+q+\cdots+q^{i-1}$.

In addition to answering the above question, one of the main results in this paper is the following enumeration of joint distributions for two statistics of 321-avoiding involutions.

\begin{thm} \label{thm:joint-distribution} We have
\begin{enumerate}
\item ${\displaystyle
\sum_{{\sigma\in\I_{n}(321)}\atop{\des(\sigma)=k}} q^{\maj(\sigma)} =q^{k^2}{{\lceil\frac{n}{2}\rceil}\brack{k}}_q {{\lfloor\frac{n}{2}\rfloor}\brack{k}}_q
}$
\item ${\displaystyle
\sum_{{\sigma\in\I_{n}(321)}\atop{\lead(\sigma)=\ell}} q^{\maj(\sigma)} =\sum_{k\ge 0} q^{k^2+k\ell+\ell-1}{{\lceil\frac{n}{2}\rceil-1}\brack{k}}_q {{\lfloor\frac{n}{2}\rfloor-\ell+1}\brack{k}}_q.
}$
\end{enumerate}
\end{thm}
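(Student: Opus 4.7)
The main tool is an explicit bijection
\[
\phi\colon\I_n(321)\;\longrightarrow\;\bigl\{\lambda:\lambda\text{ fits in the }\lfloor n/2\rfloor\times\lceil n/2\rceil\text{ rectangle}\bigr\},
\]
sending $\sigma$ to a partition $\lambda=\phi(\sigma)$ with $|\lambda|=\maj(\sigma)$ and with the side length of the Durfee square of $\lambda$ equal to $\des(\sigma)$. Such a $\phi$ can be realized either through the Robinson--Schensted correspondence, which sends $\sigma$ to a standard Young tableau $T_\sigma$ of two-row shape $(n-k,k)$ that is then encoded as a partition in the rectangle, or directly from the noncrossing 2-cycle structure of $\sigma$.

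Granting $\phi$, Part~(i) becomes an immediate consequence of the classical Durfee-square decomposition: any partition $\lambda$ inside an $a\times b$ rectangle with Durfee side $k$ decomposes uniquely into the $k\times k$ square (contributing $q^{k^2}$), a sub-partition inside a $k\times(b-k)$ block to the right (contributing ${{b}\brack{k}}_q$), and a sub-partition inside an $(a-k)\times k$ block below (contributing ${{a}\brack{k}}_q$). Setting $a=\lfloor n/2\rfloor$ and $b=\lceil n/2\rceil$ and summing $q^{|\lambda|}$ over $\lambda$ with Durfee side $k$ yields the right-hand side of~(i).

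For Part~(ii) I would analyze the constraint $\lead(\sigma)=\ell$ case by case. When $\ell=1$, the entry $\sigma_1=1$ is a fixed point; deleting it yields $\sigma'\in\I_{n-1}(321)$ with $\des(\sigma)=\des(\sigma')$ and $\maj(\sigma)=\maj(\sigma')+\des(\sigma')$, so substituting into Part~(i) for $\sigma'$ and using $\lceil(n-1)/2\rceil=\lfloor n/2\rfloor$ and $\lfloor(n-1)/2\rfloor=\lceil n/2\rceil-1$ recovers the claimed formula. When $\ell\ge 2$, the 321-avoidance condition forces $\sigma$ to contain the 2-cycle $(1,\ell)$ together with $\ell-2$ further crossing 2-cycles that pair positions $2,3,\ldots,\ell-1$ in increasing order with an increasing tuple $v_1<v_2<\cdots<v_{\ell-2}$ drawn from $\{\ell+1,\ldots,n\}$, and the remainder of $\sigma$ is a compatible 321-avoiding involution $\sigma'$ on the complementary positions. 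A direct descent count yields $\des(\sigma)=\des(\sigma')+1$, with the new descent at position $\ell-1$ contributing $q^{\ell-1}$; a careful tally of the $\maj$-contributions introduced by the $v_j$'s packages the sum over admissible tuples $(v_j)$, for fixed $\des(\sigma')=k$, into the $q$-binomial ${{\lceil n/2\rceil-1}\brack{k}}_q$, while Part~(i) applied to $\sigma'$ furnishes the remaining factor $q^{k^2}\,{{\lfloor n/2\rfloor-\ell+1}\brack{k}}_q$, with a shift $q^{k\ell}$ reconciling the position offsets. Combining these pieces gives the stated identity.

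\noindent\textbf{Main obstacle.} The hard part will be the $\ell\ge 2$ case of Part~(ii). One must pin down the exact admissibility constraints ensuring that $(v_1,\ldots,v_{\ell-2},\sigma')$ assembles into a genuine 321-avoiding involution---ruling out nested 2-cycles as well as fixed points of $\sigma'$ trapped between outer 2-cycle endpoints---and then verify that the weighted sum over admissible configurations factors cleanly, after careful bookkeeping of the induced descent positions, into the product $q^{k^2+k\ell+\ell-1}\,{{\lceil n/2\rceil-1}\brack{k}}_q\,{{\lfloor n/2\rfloor-\ell+1}\brack{k}}_q$ predicted by the statement.
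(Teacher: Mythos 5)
Your plan for part (i) is essentially the Barnabei--Bonetti--Elizalde--Silimbani route: everything rests on the asserted bijection $\phi$ carrying $\maj$ to $|\lambda|$ and $\des$ to the Durfee side, but you neither construct $\phi$ nor verify these two properties, and that is precisely the nontrivial content (it is the main theorem of \cite{BBES}, which the paper cites as an alternative proof of (i)). The paper instead composes the Deutsch--Robertson--Saracino map $\delta$ with $\xi$ to land in grand Dyck paths $\B(\lfloor\frac{n}{2}\rfloor,\lceil\frac{n}{2}\rceil)$, where $\des(\sigma)$ is the number of peaks and $\maj(\sigma)$ is the sum of all peak coordinates; since a path is determined by its peaks and the $x$- and $y$-coordinates of the $k$ peaks range over \emph{independent} strictly increasing sequences, part (i) is just $e_k(1,q,\dots,q^{\lceil n/2\rceil-1})\,e_k(q,\dots,q^{\lfloor n/2\rfloor})$. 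So (i) is recoverable from your sketch only by importing BBES wholesale.

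The genuine gap is part (ii) for $\ell\ge 2$. Your key step $\des(\sigma)=\des(\sigma')+1$ is false: the small values $\sigma_{v_j}=j+1$ create additional descents at the positions $v_j-1$ that are not descents of $\sigma'$. Concretely, take $\sigma=3\,5\,1\,7\,2\,8\,4\,6\,9\in\I_9(321)$ with $\lead(\sigma)=3$: here $\Des(\sigma)=\{2,4,6\}$, so $\des(\sigma)=3$, while $\sigma'$ (the induced involution on $\{4,6,7,8,9\}$, word $7\,8\,4\,6\,9$) has $\des(\sigma')=1$. Moreover, the admissibility constraints you flag are not a side issue: they couple the tuple $(v_1,\dots,v_{\ell-2})$ to $\sigma'$ (no fixed point of $\sigma'$ below $\max v_j$, no arc of $\sigma'$ nested under an arc $(j,v_{j-1})$), so the weighted sum does not split into an independent choice of the $v_j$'s times a part-(i) count for $\sigma'$, and the product $q^{k^2+k\ell+\ell-1}{\lceil\frac{n}{2}\rceil-1\brack k}_q{\lfloor\frac{n}{2}\rfloor-\ell+1\brack k}_q$ will not emerge from this decomposition as described. (Your $\ell=1$ reduction via deleting the fixed point $1$ is correct, granting (i).) The paper sidesteps all of this: by Lemma \ref{lem:lead=ell}, $\lead(\sigma)=\ell$ just forces the path to begin $\N^{\ell-1}\E$, so for $\ell>1$ there is a peak $(0,\ell-1)$ and the remaining $k-1$ peaks have $x$-coordinates in $\{1,\dots,\lceil\frac{n}{2}\rceil-1\}$ and $y$-coordinates in $\{\ell,\dots,\lfloor\frac{n}{2}\rfloor\}$ chosen independently, giving $q^{\ell-1}e_{k-1}(q,\dots,q^{\lceil n/2\rceil-1})\,e_{k-1}(q^{\ell},\dots,q^{\lfloor n/2\rfloor})$ at once; you would need to redo your $\ell\ge2$ case along these lines (or through the BBES partition picture) rather than through the cycle decomposition.
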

The proof of the above theorem involves a specialization of the elementary symmetric functions.
We remark that the identity in (i) of Theorem \ref{thm:joint-distribution} has been proved by Barnsbei et al. \cite{BBES} by a different method. 

The second main result is the following refined major-balance identities on 321-avoiding involutions, respecting the leading element. A curious point is that the right hand side of the  identity in (iv) of Theorem \ref{thm:ARM-identity-for-lead-InA321} is a combination of the generating functions of leading element for $\I_{2n+1}(321)$ and $\I_{2n}(321)$.

\medskip
\begin{thm} \label{thm:ARM-identity-for-lead-InA321} For all $n\ge 1$, we have
\begin{enumerate}
\item ${\displaystyle
\sum_{\sigma\in\I_{4n}(321)} (-1)^{\maj(\sigma)}q^{\lead(\sigma)} =\frac{1}{q}\sum_{\sigma\in\I_{2n}(321)}q^{2\cdot\lead(\sigma)},
}$
\item ${\displaystyle
\sum_{\sigma\in\I_{4n+2}(321)} (-1)^{\maj(\sigma)}q^{\lead(\sigma)} =\left(\frac{1}{q}-1\right)\sum_{\sigma\in\I_{2n+1}(321)}q^{2\cdot\lead(\sigma)},
}$
\item ${\displaystyle
\sum_{\sigma\in\I_{4n+3}(321)} (-1)^{\maj(\sigma)}q^{\lead(\sigma)}=\left(\frac{2}{q}-1\right)\sum_{\sigma\in\I_{2n+1}(321)}q^{2\cdot\lead(\sigma)}.
}$
\item ${\displaystyle
\sum_{\sigma\in\I_{4n+1}(321)} (-1)^{\maj(\sigma)}q^{\lead(\sigma)}=(\frac{1}{q}-1)\sum_{\sigma\in\I_{2n+1}(321)}q^{2\cdot\lead(\sigma)}+\sum_{\sigma\in\I_{2n}(321)}q^{2\cdot\lead(\sigma)}.
}$
\end{enumerate}
\end{thm}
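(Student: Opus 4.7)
The plan is to pass from $\I_n(321)$ to lattice paths in an $\lfloor n/2\rfloor\times\lceil n/2\rceil$ rectangle via a known bijection, construct a sign-reversing involution on these paths that flips $(-1)^{\maj}$ while preserving $\lead$, and read the right-hand side off the fixed points.

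First, I would invoke the bijection $\phi:\I_n(321)\to\mathcal{L}_n$, where $\mathcal{L}_n$ denotes the lattice paths from $(0,0)$ to $(\lfloor n/2\rfloor,\lceil n/2\rceil)$ using unit east and north steps. Under $\phi$, the descent number $\des(\sigma)$ corresponds to the number of EN-corners $k$, the major index $\maj(\sigma)$ to a natural $q$-weight with a $k^2$ offset (essentially the area), and $\lead(\sigma)=\ell$ forces a short initial prefix contributing $q^{\ell-1}$ and confining the rest of the path to a $(\lfloor n/2\rfloor-\ell+1)\times(\lceil n/2\rceil-1)$ subrectangle. This is the same bijection that underlies Theorem \ref{thm:joint-distribution}, so the parity of $\maj$ is visible as the parity of the area under the path.

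Next, I would construct a sign-reversing involution $\iota:\mathcal{L}_n\to\mathcal{L}_n$ that flips $(-1)^{\maj}$ while preserving $\lead$. Scanning the path beyond the lead prefix, $\iota$ locates the first site where a local toggle swapping an EN-corner with an NE-corner (or vice versa) can be performed, and performs it; such a toggle changes the area by exactly one and hence reverses $(-1)^{\maj}$. The fixed points are paths whose east and north runs beyond the lead prefix all have even length. These rescale run-by-run to lattice paths in a rectangle of roughly half the dimensions --- the rectangle for $\I_{2n}(321)$ or $\I_{2n+1}(321)$ --- and the rescaling doubles exponents, producing the substitution $q\mapsto q^2$ on the right-hand side of Theorem \ref{thm:ARM-identity-for-lead-InA321}.

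Finally, I would treat the four cases of $n\bmod 4$ separately. The parities of $\lceil n/2\rceil$ and $\lfloor n/2\rfloor$ control which lead-encoding prefixes survive as fixed points and therefore yield the scalar prefactors $1/q$, $1/q-1$, and $2/q-1$ in cases (i)-(iii). The main obstacle is case (iv): since its right-hand side is a combination of generating functions over $\I_{2n+1}(321)$ and $\I_{2n}(321)$, the fixed-point set of $\iota$ must split into two families distinguished by an extra parity bit in the lead prefix --- one family rescaling to $\mathcal{L}_{2n+1}$ and the other to $\mathcal{L}_{2n}$. Correctly defining $\iota$ so that these two families separately match the two terms on the right-hand side, and checking the sign-preserving correspondence in each, is the central technical step.
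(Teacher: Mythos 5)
Your overall plan follows the paper's route (encode $\I_n(321)$ as lattice paths in a $\lfloor n/2\rfloor\times\lceil n/2\rceil$ rectangle, with $\lead$ read off the initial north run, then kill pairs of opposite sign by a corner-toggling involution and biject the fixed points with half-size paths), but there is a genuine error at the heart of your involution. Under the bijection used here, $\maj(\sigma)$ is \emph{not} the area under the path; it is $\sump(\pi)$, the sum of the coordinates $x+y$ over all peaks ($\N\E$-corners) of $\pi$. Area and $\sump$ are equidistributed but not equal, and their parities differ in general (e.g.\ $\N\E\N\E$ in $\B(2,2)$ has $\sump=4$ but area $3$). Consequently your toggle rule ``swap an $\E\N$-corner with an $\N\E$-corner at the first available site, which changes the area by one and hence flips $(-1)^{\maj}$'' does not work: toggling a corner at an \emph{even} lattice point need not change the parity of $\sump$ at all (turning $\N\N\E\E$ into $\N\E\N\E$ replaces a peak at even position $i$ by peaks at $i-1$ and $i+1$, changing $\sump$ by the even number $i$). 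The sign reverses only when the toggled corner sits at an \emph{odd} lattice point, and guaranteeing the existence and canonical choice of such a corner is exactly what the paper's rule achieves: take the maximal-run (``primal'') factorization $\mu_0\mu_1\cdots\mu_d$, locate the \emph{last} run $\mu_k$ of odd length, and swap the first step of $\mu_k$ with the last step of $\mu_{k-1}$; the parity count of the remaining runs forces that junction to be odd, the swap is self-inverse, and it never touches $\mu_0$, so $\lead$ is preserved. A ``first available toggle'' rule is neither sign-reversing nor obviously an involution.

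The second gap is the fixed-point analysis, which is where the four prefactors actually come from. Your description ``fixed points are the paths whose runs beyond the lead prefix all have even length'' is only possible in case (i), where the rectangle is $\B(2n,2n)$; in cases (ii)--(iv) the path has an odd number of east and/or north steps, so some run must be odd, and the fixed points are instead paths with exactly one odd peak or odd valley confined to the lines $x=0$ or $x=1$ (in the paper these are built from $\B_j(n,n+1)$ by the maps $\phi_1,\phi_2$, $\psi_0,\psi_1,\psi_2$, $\varphi_0,\varphi_1,\varphi_2$). The signs of these surviving families (an odd peak on $x=0$ gives $\sump$ odd, hence a $-q^{2j+2}$ contribution, while the families with no odd peak give $+q^{2j+1}$) are precisely what produce $\frac1q$, $\frac1q-1$, $\frac2q-1$, and in case (iv) the split of fixed points according to the last step of the half-size path $\omega$ ($\varphi_0$ versus $\varphi_1,\varphi_2$) is what yields the mixed right-hand side $(\frac1q-1)\sum_{\I_{2n+1}}+\sum_{\I_{2n}}$. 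You correctly flag case (iv) as the crux, but you leave this construction, and the corresponding sign bookkeeping in (ii) and (iii), entirely unspecified, so the proposal as written establishes none of the four identities.
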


\section{A bijection between 321-avoiding permutations and grand Dyck paths}  Let $m,n$ be positive integers. A \emph{Dyck path} of length $2n$ is a lattice path from $(0,0)$ to $(n,n)$, using {\em north} step $(0,1)$ and {\em east} step $(1,0)$, that stays weakly above the line $y=x$. A \emph{partial Dyck path} of length $n$ is a lattice path from $(0,0)$ to the line $x=n$ staying weakly above the line $y=x$. Let $\P_n$ be the set of partial Dyck paths of length $n$.
A \emph{grand Dyck path} of length $m+n$ is a lattice path from $(0,0)$ to $(m,n)$  without restriction.
Let $\B(n,m)$ denote the set of grand Dyck paths from $(0,0)$ to $(m,n)$. Let $\N$ and $\E$ denote a north
step and an east step, respectively.

We shall give combinatorial proofs of Theorem \ref{thm:joint-distribution} and Theorem \ref{thm:ARM-identity-for-lead-InA321}  on the basis of a bijection between $\I_n(321)$ and $\B(\lfloor\frac{n}{2}\rfloor,\lceil\frac{n}{2}\rceil)$ given by Barnabei et al. \cite{BBES}.
With the partial Dyck paths $\P_n$ being the intermediate stage, the bijection is the composition of two maps $\delta:\I_n(321)\rightarrow\P_n$ and $\xi:\P_n\rightarrow\B(\lfloor\frac{n}{2}\rfloor,\lceil\frac{n}{2}\rceil)$. 
First, we describe the map $\delta:\I_n(321)\rightarrow\P_n$ given by Deutsch et al. \cite{DRS}.

\subsection{The bijection $\delta:\I_n(321)\rightarrow\P_n$.} Given a permutation $\sigma=\sigma_1\cdots\sigma_n\in \I_n(321)$, we associate $\sigma$ with a path $\tau=\delta(\sigma)=z_1\cdots z_n\in\P_n$, where  $z_i=\N$ if $\sigma_i\ge i$ and $z_i=\E$ if $\sigma_i<i$.

To find $\delta^{-1}$, we label the steps of $\tau$ from left to right by $1,2,\dots,n$. Proceeding from right to left across $\tau$, couple each $\E$ step with the nearest uncoupled $\N$ step to its left. Then the cycle structure of the involution $\delta^{-1}(\tau)$ can be determined by taking the labels of a coupled pair as a transposition and an uncoupled $\N$ step as a fixed point. Recall that a permutation is an involution if its cycle structure contains no cycle of length greater than two.

\begin{exa} \label{exa:partial-Dyck} {\rm Consider $\sigma=2\, 1\, 3\, 6\, 7\, 4\, 5\, 8\, 10\, 9\, 11\in\I_{11}(321)$. The partial Dyck path $\tau=\delta(\sigma)$ is shown on the left hand side of Figure \ref{fig:lead-grand-Dyck-path}. For the inverse map,  if we label the steps from left to right by $1,2,\dots,11$ and traverse $\tau$ backward, then we obtain the cycle structure of $\sigma=\delta^{-1}(\tau)$, i.e., $(1\, 2)(3)(4\,6)(5\,7)(8)(9\,10)(11)$.
}
\end{exa}

\begin{figure}[ht]
\begin{center}
\psfrag{xi}[][b][1]{$\xi$}
\includegraphics[width=3in]{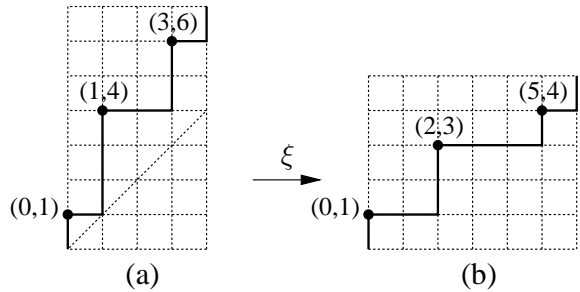}
\end{center}
\caption{\small An example for the two-stage bijection $\xi\circ\delta$ between $\P_{11}$ and $\B(5,6)$.}
\label{fig:lead-grand-Dyck-path}
\end{figure}

A \emph{peak} at position $i$ of $\tau$ is an occurrence $z_iz_{i+1}=\N\E$, which is sometimes identified with the point $p$ between $z_i$ and $z_{i+1}$. Note that the coordinate $(x,y)$ of $p$ satisfies $x+y=i$ and that every descent $\sigma_i>\sigma_{i+1}$ of $\sigma$ is carried to a peak at position $i$ of $\tau$. We observe that if all fixed points in $\sigma$ are ignored, each east step of $\tau$ must be coupled with a remaining north step to its left.
A \emph{valley} of $\tau$ is an occurrence of $\E\N$.
A lattice point with coordinate $(x,y)$ is said to be \emph{even} (\emph{odd}, respectively) if $x+y$ is even (odd, respectively). For convenience, we say that a peak or valley $p$ is  \emph{odd} (\emph{even}, respectively) if $p$ is an odd  (even, respectively) lattice point.
Let $\sump(\tau)$ be the sum of the $x$-coordinates and $y$-coordinates of all peaks in $\tau$.

\medskip
\noindent
\subsection{The bijection $\xi:\P_{n}\rightarrow\B(\lfloor\frac{n}{2}\rfloor,\lceil\frac{n}{2}\rceil)$.}
Let $\tau\in\P_{n}$ be a partial Dyck path with $k$ more $\N$ steps than $\E$ steps ($0\le k\le n$). Match the $\N$ steps and $\E$ steps that face each other, in the sense that the line segment from the midpoint of $\N$ to the midpoint of $\E$ has slope 1 and stays below the path $\tau$.
Then we construct the path $\xi(\tau)\in\B(\lfloor\frac{n}{2}\rfloor,\lceil\frac{n}{2}\rceil)$ by changing the first $\lceil\frac{k}{2}\rceil$ unmatched $\N$ steps into $\E$ steps.  Note that a peak $(x,y)$ in $\tau$ is carried to a peak $(x',y')$ in $\xi(\tau)$ with  $x+y=x'+y'$.

\begin{exa} {\rm
Following Example \ref{exa:partial-Dyck}, consider the path $\tau=z_1\cdots z_{11}\in\P_{11}$ shown on the left hand side of Figure \ref{fig:lead-grand-Dyck-path}. Note that $\sump(\tau)=15$. The unmatched $\N$ steps are $z_3,z_8$ and $z_{11}$. Then the corresponding path $\xi(\pi)\in\B(5,6)$ is obtained from $\tau$ by changing $z_3$ and $z_8$  into $\E$ steps, shown on the right hand side of Figure \ref{fig:lead-grand-Dyck-path}.  Note that the peaks $(1,4)$ and $(3,6)$ of $\tau$ are carried to the peaks $(2,3)$ and $(5,4)$ of $\xi(\tau)$, respectively and $\sump(\xi(\tau))=15$.
}
\end{exa}

To construct $\xi^{-1}$, given a grand Dyck path $\pi\in\B(\lfloor\frac{n}{2}\rfloor,\lceil\frac{n}{2}\rceil)$, match the $\N$ steps and $\E$ steps that face each other in $\pi$. Then the path $\xi^{-1}(\pi)$ is recovered from $\pi$ by changing the remaining unmatched $\E$ steps into $\N$ steps.

\medskip
The following properties about the statistics $\maj(\sigma)$ and $\lead(\sigma)$ hold.

\begin{lem} \label{lem:lead=ell} Given a  permutation $\sigma\in \I_{n}(321)$ with $\lead(\sigma)=\ell$, let $\tau=\delta(\sigma)\in\P_{n}$ and let $\pi=\xi(\tau)\in\B(\lfloor\frac{n}{2}\rfloor,\lceil\frac{n}{2}\rceil)$.  Then the following results hold.
\begin{enumerate}
\item $\maj(\sigma)= \sump(\tau)=\sump(\pi)$.
\item $1\le \ell \le \lfloor\frac{n}{2}\rfloor+1$.
\item The path $\pi$ passes through the points $(0,\ell-1)$ and $(1,\ell-1)$.
\item The number of permutations $\sigma\in \I_{n}(321)$ with $\lead(\sigma)=\ell$ is ${{n-\ell}\choose{\lceil\frac{n}{2}\rceil-1}}$.
\end{enumerate}
\end{lem}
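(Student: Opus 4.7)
My plan is to establish the four parts in the order (i), (iii), (ii), (iv), with (ii) and (iv) following from (iii).

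For (i), I will show that the descents of $\sigma$ coincide exactly with the peak positions of $\tau$. The direction ``peak implies descent'' is immediate, since $z_iz_{i+1}=\N\E$ forces $\sigma_i\ge i$ and $\sigma_{i+1}\le i$, hence $\sigma_i>\sigma_{i+1}$. For the converse I will use 321-avoidance together with the involution property. If $\sigma_i>\sigma_{i+1}$ and $\sigma_{i+1}\ge i+1$, writing $a=\sigma_i$, $b=\sigma_{i+1}$ yields $a>b\ge i+1$ with $\sigma_a=i$ and $\sigma_b=i+1$, producing a 321-pattern $(a,i+1,i)$ at positions $i<b<a$. Similarly $\sigma_i<i$ yields a 321-pattern at positions $\sigma_{i+1}<\sigma_i<i+1$. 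Once descents equal peaks, each peak at position $i$ sits at a lattice point $(x,y)$ with $x+y=i$, so $\maj(\sigma)=\sump(\tau)$; the equality $\sump(\tau)=\sump(\pi)$ follows from the peak-preserving property of $\xi$ recorded earlier.

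For (iii), the heart is to show that the first $\ell$ steps of $\tau$ survive $\xi$ unaltered. From $\sigma_1=\ell$ and $\sigma_\ell=1$, any $\sigma_j<\ell$ with $2\le j\le\ell-1$ would produce a 321-pattern $(\ell,\sigma_j,1)$ at positions $1,j,\ell$; hence $\sigma_j>\ell>j$, which forces $z_1=\cdots=z_{\ell-1}=\N$ and $z_\ell=\E$. Next I argue that the unmatched north steps under the geometric (equivalently, parenthesis) matching used by $\xi$ are precisely the fixed points of $\sigma$. The key observation is that the unmatched set is a canonical invariant of the path: it consists of those $\N$-positions $i$ at which the partial sum $s_i=\#\N-\#\E$ through step $i$ satisfies $s_i=\min_{j\ge i}s_j$. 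This suffix-minimum description also governs the uncoupled $\N$ steps in the right-to-left greedy matching used by $\delta^{-1}$, so both matchings leave the same set of $\N$ steps unmatched, and by the definition of $\delta^{-1}$ this set equals the fixed points of $\sigma$. Since $\sigma_i\ne i$ for $1\le i\le\ell-1$, none of $N_1,\ldots,N_{\ell-1}$ is unmatched, and $E_\ell$ is matched to $N_{\ell-1}$ in the parenthesis matching. As $\xi$ only alters unmatched north steps, $\pi$ begins with an initial segment of $\ell-1$ north steps followed by an east step, and thus passes through $(0,\ell-1)$ and $(1,\ell-1)$.

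Part (ii) is immediate: $\pi\in\B(\lfloor n/2\rfloor,\lceil n/2\rceil)$ has all $y$-coordinates at most $\lfloor n/2\rfloor$, so passing through $(0,\ell-1)$ forces $\ell\le\lfloor n/2\rfloor+1$. For (iv), bijectivity of $\xi\circ\delta$ combined with (iii) sets up a bijection between $\{\sigma\in\I_n(321):\lead(\sigma)=\ell\}$ and the grand Dyck paths in $\B(\lfloor n/2\rfloor,\lceil n/2\rceil)$ whose first east step is at position $\ell$; the reverse implication is automatic, since the unique $\sigma$ mapping to such a $\pi$ has $\lead(\sigma)=\ell$ by the forward direction. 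Each such path is freely determined by its continuation from $(1,\ell-1)$ to $(\lceil n/2\rceil,\lfloor n/2\rfloor)$, using $\lceil n/2\rceil-1$ east steps and $\lfloor n/2\rfloor-\ell+1$ north steps, giving ${n-\ell\choose\lceil n/2\rceil-1}$ paths. The principal obstacle I foresee is the canonical identification of unmatched north steps across the two different matchings; once that is in hand, the remainder is bookkeeping on the bijection.
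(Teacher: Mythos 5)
Your overall route is the same as the paper's---(i) via the descent--peak correspondence, (ii)/(iii) via the prefix $\N^{\ell-1}\E$, and (iv) by counting lattice paths from $(1,\ell-1)$ to $(\lceil\frac{n}{2}\rceil,\lfloor\frac{n}{2}\rfloor)$---and the extra detail you supply for (iii), identifying the unmatched north steps of the facing matching with the fixed points of $\sigma$ through the suffix-minimum description, is correct and fills in what the paper only asserts. (You do state, rather than prove, that the suffix-minimum description also governs the right-to-left coupling used for $\delta^{-1}$; this is true and routine, but a line of justification should be added.)

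The genuine problem is the case $\ell=1$, which your proof of (iii) does not cover and in fact contradicts. If $\ell=1$ then $\sigma_1=1$, so $z_1=\N$, not $\E$ as your conclusion ``$z_\ell=\E$'' would force; and what (iii) demands is that $\pi$ pass through $(0,0)$ and $(1,0)$, i.e.\ that $\pi$ \emph{start with an east step}. So here the first step of $\tau$ must be \emph{altered} by $\xi$, which is the opposite of the preservation statement your argument establishes (``the first $\ell$ steps of $\tau$ survive $\xi$ unaltered''). The repair is short and uses exactly the machinery you set up: $1$ is a fixed point of $\sigma$, so $z_1$ is an unmatched north step, and it is the first one; the number $k$ of unmatched north steps equals the number of fixed points, hence $k\ge 1$ and $\lceil\frac{k}{2}\rceil\ge 1$, so $\xi$ converts $z_1$ into an east step and $\pi$ begins with $\E$, passing through $(0,0)$ and $(1,0)$. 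As written, (iii) is incomplete for permutations with $\sigma_1=1$ (e.g.\ the identity), and since your (ii) and (iv) are deduced from (iii), they inherit the omission.
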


\begin{proof}  Note that every $i\in\Des(\sigma)$ corresponds to a peak $(x,y)$ of $\tau$ and a peak $(x',y')$ of $\pi$ with $x+y=x'+y'=i$. The assertion (i) follows.

Since $\lead(\sigma)=\ell$, $\sigma_{\ell}=1$. We observe that either $\ell=1$ or $\sigma_1<\cdots<\sigma_{\ell-1}$ if $\ell>1$ since $\sigma$ is 321-avoiding. If $\ell>1$ then the entries $2,\dots,\ell-1$ appear to the right of $\sigma_{\ell}$ and hence $2\ell-2\le n$. Moreover, by the construction of the maps $\delta$ and $\xi$, we observe that the grand Dyck path $\pi$ has the prefix $\N^{\ell-1}\E$. The assertions (ii) and (iii) follow.

Note that the number of permutation $\sigma\in \I_{n}(321)$ with $\lead(\sigma)=\ell$ coincides with the number of lattice paths from the point $(1,\ell-1)$ to the point $(\lceil\frac{n}{2}\rceil,\lfloor\frac{n}{2}\rfloor)$. The assertion (iv) follows.
\end{proof}


\section{A combinatorial proof of Theorem \ref{thm:joint-distribution}}

Define the $k$-th \emph{elementary symmetric functions} in $n$ variables
\[
e_k=e_k(x_1,x_2,\dots,x_n) := \sum_{1\le i_1<i_2<\cdots<i_k\le n} x_{i_1}x_{i_2}\cdots x_{i_k}.
\]
Note that $e_0(x_1,\dots,x_n)=1$ and $e_k(x_1,\dots,x_n)=0$ for $k>n$.
Recall the principle specialization
of $e_k(x_1,\dots,x_n)$ (see e.g. \cite[Prop. 7.8.3]{EC2}):
\[
e_k(1,1,\dots,1)={{n}\choose{k}}\quad\mbox{ and}\quad e_k(1,q,\dots,q^{n-1})=q^{{{k}\choose{2}}}{{n}\brack{k}}_q.
\]

\smallskip
\noindent
\emph{Proof of Theorem \ref{thm:joint-distribution}.} (i) By the bijection $\xi\circ\delta$ between $\I_n(321)$ and $\B(\lfloor\frac{n}{2}\rfloor,\lceil\frac{n}{2}\rceil)$, a permutation $\sigma\in\I_n(321)$ with $\des(\sigma)=k$ is mapped to a grand Dyck path $\pi=\xi(\delta(\sigma))$ with $k$ peaks, say $(x_1,y_1),\dots,(x_k,y_k)$ with $0\le x_1<\cdots <x_k\le\lceil\frac{n}{2}\rceil-1$ and $1\le y_1<\cdots<y_k\le\lfloor\frac{n}{2}\rfloor$. Moreover, $\maj(\sigma)=\sump(\pi)=x_1+\cdots+x_k+y_1+\cdots+y_k$. Hence
\begin{align*}
\sum_{{\sigma\in\I_n(321)}\atop{\des(\sigma)=k}} q^{\maj(\sigma)}
    &=\sum_{{0\le x_1<\cdots<x_k\le\lceil\frac{n}{2}\rceil-1}\atop{1\le y_1<\cdots<y_k\le\lfloor\frac{n}{2}\rfloor}} q^{x_1+\cdots+x_k}\cdot q^{y_1+\cdots+y_k} \\
    &= e_k(1,q,\dots,q^{\lceil\frac{n}{2}\rceil-1})\cdot e_k(q,q^2,\dots,q^{\lceil\frac{n}{2}\rceil})\\
    &=q^{k^2}{{\lceil\frac{n}{2}\rceil}\brack{k}}_q {{\lfloor\frac{n}{2}\rfloor}\brack{k}}_q.
\end{align*}

(ii) Let $\sigma\in\I_n(321)$ be a permutation with $\lead(\sigma)=\ell$ and $\des(\sigma)=k$. Then the corresponding grand Dyck path $\pi=\xi(\delta(\sigma))$ can be factorized as $\pi=\N^{\ell-1}\E\mu$. If $\ell=1$ then the segment $\mu$ contains $k$ peaks, say $(x_1,y_1),\cdots,(x_k,y_k)$ with $1\le x_1<\cdots<x_k\le\lceil\frac{n}{2}\rceil-1$ and $1\le y_1<\cdots<y_k\le\lfloor\frac{n}{2}\rfloor$. Hence
\begin{align*}
\sum_{{{\sigma\in\I_n(321)}\atop{\des(\sigma)=k}}\atop{\lead(\sigma)=1}} q^{\maj(\sigma)}
    &= e_k(q,q^2,\dots,q^{\lceil\frac{n}{2}\rceil-1})\cdot e_k(q,q^2,\dots,q^{\lceil\frac{n}{2}\rceil})\\
    &=q^{k^2+k}{{\lceil\frac{n}{2}\rceil-1}\brack{k}}_q {{\lfloor\frac{n}{2}\rfloor}\brack{k}}_q.
\end{align*}
Otherwise $\ell>1$ , and the segment $\mu$ contains another $k-1$ peaks, say $(x_1,y_1),\dots,(x_{k-1},y_{k-1})$ with $1\le x_1<\cdots<x_{k-1}\le\lceil\frac{n}{2}\rceil-1$ and $\ell\le y_1<\cdots<y_{k-1}\le\lfloor\frac{n}{2}\rfloor$. Hence
\begin{align*}
\sum_{{{\sigma\in\I_n(321)}\atop{\des(\sigma)=k}}\atop{\lead(\sigma)=\ell}} q^{\maj(\sigma)}
    &= q^{\ell-1}e_{k-1}(q,q^2,\dots,q^{\lceil\frac{n}{2}\rceil-1})e_{k-1}(q^{\ell},q^{\ell+1},\dots,q^{\lfloor\frac{n}{2}\rfloor})\\
    &=q^{(k-1)^2+(k-1)\ell+\ell-1}{{\lceil\frac{n}{2}\rceil-1}\brack{k-1}}_q {{\lfloor\frac{n}{2}\rfloor-\ell+1}\brack{k-1}}_q.
\end{align*}
The assertion follows. \qed

\medskip
We remark that Barnabei et al. \cite{BBES} proved (i) of Theorem \ref{thm:joint-distribution} by establishing a bijection between the paths in $\B(\lfloor\frac{n}{2}\rfloor,\lceil\frac{n}{2}\rceil)$ and the partitions whose Young diagrams fit inside the $\lfloor\frac{n}{2}\rfloor\times\lceil\frac{n}{2}\rceil$-rectangle so that the descent set of $\sigma\in\I_n(321)$ is carried to the hook-decomposition of the mapped partition.

\medskip
With the result in (ii) of Theorem \ref{thm:joint-distribution}, we give an arithmetic verification of Theorem \ref{thm:ARM-identity-for-lead-InA321} as follows.
For positive integers $m, n$, we have the following facts (i) $[m]_{q=-1}=0$ if and only if $m$ is even, and  (ii) if $m,n$ have the same parity, then
\[
  \lim_{q\rightarrow -1} \frac{[n]_q}{[m]_q}=
 \left\{
\begin{array}{cl} \frac{n}{m} & \mbox{\rm if $m, n$ are even}\\
1 & \mbox{\rm if $m,n$ are odd.}
       \end{array}
\right.
\]
Making use of the above facts, we observe that
\begin{equation} \label{eqn:q-binomial}
{{n}\brack {k}}_{q=-1}=\lim_{q\rightarrow -1}\frac{[n]_q[n-1]_q\cdots[n-k+1]_q}{[1]_q[2]_q\cdots[k]_q}
=\left\{ \begin{array}{ll}
              0 & \text{if $n$ is even and $k$ is odd}\\
              {\displaystyle{ {{\lfloor\frac{n}{2}\rfloor}\choose{\lfloor\frac{k}{2}\rfloor}} }} & \text{otherwise.}
             \end{array}
     \right.
\end{equation}

Now, we verify the identity in (i) of Theorem \ref{thm:ARM-identity-for-lead-InA321}.
By (ii) of Theorem \ref{thm:joint-distribution}, we have the left hand side
\[
\sum_{{\sigma\in\I_{4n}(321)}\atop{\lead(\sigma)=\ell}} (-1)^{\maj(\sigma)}
  = \lim_{q\rightarrow -1} \sum_{k\ge 0} q^{k^2+k\ell+\ell-1}{{2n-1}\brack{k}}_q {{2n-\ell+1}\brack{k}}_q. \\
\]
If $\ell$ is odd, say $\ell=2\ell'-1$ then by Eq.\,(\ref{eqn:q-binomial}) we have
\begin{align*}
\sum_{{\sigma\in\I_{4n}(321)}\atop{\lead(\sigma)=2\ell'-1}} (-1)^{\maj(\sigma)} &= \sum_{k'\ge 0} {{n-1}\choose{k'}}{{n-\ell'+1}\choose{k'}} \\
&={{2n-\ell'}\choose{n-1}}=|\{\sigma\in\I_{2n}(321): \lead(\sigma)=\ell'\}|.
\end{align*}
If $\ell$ is even, say $\ell=2\ell'$, then
\[
\sum_{{\sigma\in\I_{4n}(321)}\atop{\lead(\sigma)=2\ell'}} (-1)^{\maj(\sigma)}=\sum_{k\ge 0} {{n-1}\choose{\lfloor\frac{k}{2}\rfloor}}{{n-\ell'}\choose{\lfloor\frac{k}{2}\rfloor}}=0.
\]
This agrees with the right hand side of (i) of Theorem \ref{thm:ARM-identity-for-lead-InA321}. The other identities (ii), (iii) and (iv) of Theorem \ref{thm:ARM-identity-for-lead-InA321} can be verified in a similar manner. \qed

\medskip
\section{A combinatorial proof of Theorem \ref{thm:ARM-identity-for-lead-InA321}}
For any grand Dyck path $\pi\in\B(n,m)$, we factorize $\pi$ as $\pi=\mu_0\mu_1\cdots\mu_d$, where each segment $\mu_{2i}$ ($\mu_{2i+1}$, respectively) is a maximal sequence of consecutive $\N$ steps ($\E$ steps, respectively). This is called the \emph{primal factorization} of $\pi$. Note that $\mu_0$ is empty if $\pi$ starts with an east step. 

According to the length of $\mu_0$, we partition the set $\B(n,m)$ of into subsets $\B_j(n,m)$ for $0\le j\le n$, where
$\B_j(n,m)$ consists of the paths pass the points $(0,j)$ and $(1,j)$. By (iii) of Lemma \ref{lem:lead=ell}, we have the following result.

\begin{lem} For $0\le j\le \lfloor\frac{n}{2}\rfloor$,
the paths in $\B_j(\lfloor\frac{n}{2}\rfloor,\lceil\frac{n}{2}\rceil)$ are in one-to-one correspondence with the permutations $\sigma\in\I_n(321)$ with $\lead(\sigma)=j+1$.
\end{lem}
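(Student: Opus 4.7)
The plan is to derive this lemma directly from the bijection $\xi\circ\delta : \I_n(321) \to \B(\lfloor\frac{n}{2}\rfloor,\lceil\frac{n}{2}\rceil)$ established in Section~2, together with Lemma~\ref{lem:lead=ell}. The statement is essentially a repackaging of Lemma~\ref{lem:lead=ell}(iii), so the work is to verify that the length of the initial vertical segment $\mu_0$ in the primal factorization of $\pi=\xi(\delta(\sigma))$ matches $\lead(\sigma)-1$, and that this gives a perfect matching on each block of the partition.

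First I would observe that the sets $\B_j(\lfloor\frac{n}{2}\rfloor,\lceil\frac{n}{2}\rceil)$ for $0\le j\le\lfloor\frac{n}{2}\rfloor$ partition $\B(\lfloor\frac{n}{2}\rfloor,\lceil\frac{n}{2}\rceil)$: every grand Dyck path $\pi$ has a uniquely determined $\mu_0$, whose length equals the $y$-coordinate of the unique lattice point $(0,j)$ through which $\pi$ passes before taking its first east step; this length lies in $[0,\lfloor\frac{n}{2}\rfloor]$ because $\pi$ contains exactly $\lfloor\frac{n}{2}\rfloor$ north steps in total. Passing through both $(0,j)$ and $(1,j)$ is equivalent to saying that $\mu_0$ has length precisely $j$, so the definition of $\B_j$ as given in Section~4 agrees with this partition by initial-segment length.

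Next I would apply Lemma~\ref{lem:lead=ell}(ii) to see that on the other side, $\I_n(321) = \bigsqcup_{j=0}^{\lfloor n/2\rfloor} \{\sigma\in\I_n(321) : \lead(\sigma)=j+1\}$, since the admissible values of $\lead(\sigma)$ are exactly $1,2,\dots,\lfloor\frac{n}{2}\rfloor+1$. Then I would invoke Lemma~\ref{lem:lead=ell}(iii): if $\lead(\sigma)=j+1$, then $\pi=\xi(\delta(\sigma))$ passes through $(0,j)$ and $(1,j)$, which by the previous paragraph means $\pi\in\B_j$. Hence $\xi\circ\delta$ restricts to a well-defined map
\[
\{\sigma\in\I_n(321):\lead(\sigma)=j+1\}\;\longrightarrow\;\B_j(\lfloor\tfrac{n}{2}\rfloor,\lceil\tfrac{n}{2}\rceil).
\]

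Finally, since $\xi\circ\delta$ is a bijection on the total sets and each side is partitioned as above into $\lfloor\frac{n}{2}\rfloor+1$ blocks, with the image of each block on the left contained in the corresponding block on the right, the restricted maps must themselves be bijections; indeed injectivity is inherited from the global map, and surjectivity onto each $\B_j$ follows by a pigeonhole counting argument across the $\lfloor\frac{n}{2}\rfloor+1$ blocks. I do not anticipate a genuine obstacle here: the only subtle point is keeping track of the $j=0$ case, where $\mu_0$ is empty and $\pi$ begins with an east step, corresponding to $\lead(\sigma)=1$ (i.e.\ $\sigma_1=1$, a fixed point); this is consistent with Lemma~\ref{lem:lead=ell}(iii) as stated.
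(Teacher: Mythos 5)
Your proposal is correct and follows essentially the same route as the paper, which simply invokes Lemma~\ref{lem:lead=ell}(iii) (together with the bijection $\xi\circ\delta$) to obtain this lemma. Your extra bookkeeping --- identifying $\B_j$ with the paths whose initial north run $\mu_0$ has length exactly $j$, and deducing surjectivity on each block from the partition of both sides plus global bijectivity --- is exactly the routine verification the paper leaves implicit.
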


\medskip
\subsection{The case $\Phi_1:\B(2n,2n)\rightarrow\B(2n,2n)$} To prove (i) of Theorem \ref{thm:ARM-identity-for-lead-InA321}, we shall establish a $\sump$-parity-reversing involution $\Phi_1:\B(2n,2n)\rightarrow\B(2n,2n)$ while preserving the initial segment from the beginning to the first east step. Let $\F(2n,2n)\subseteq\B(2n,2n)$ be the set of fixed points of the map $\Phi_1$. The set $\F(2n,2n)$ can be constructed from $\B(n,n)$ as follows.

For each path $\omega\in\B(n,n)$, we form a path $\gamma(\omega)$ by duplicating every step of $\omega$. Then $\gamma(\omega)\in\B(2n,2n)$.
Note that the peaks and valleys of $\gamma(\omega)$ are all even lattice points. Moreover, every path without odd peaks and odd valleys in $\B(2n,2n)$ can be reduced to a path in $\B(n,n)$ by a reverse operation.
For example,
for $\omega=\N\E\E\N\E\N\in\B(3,3)$, the path $\gamma(\omega)$ is shown as Figure \ref{fig:lead-duplicate-1}.

\begin{figure}[ht]
\begin{center}
\includegraphics[width=1.2in]{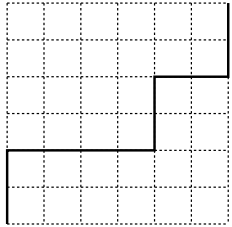}
\end{center}
\caption{\small Construction of the path $\gamma(\omega)$ for $\omega=\N\E\E\N\E\N\in\B(3,3)$.}
\label{fig:lead-duplicate-1}
\end{figure}

The set $\F(2n,2n)$ is defined by
\[
\F(2n,2n)=\{\gamma(\omega): \omega\in\B(n,n)\},
\]
and for $0\le i\le 2n$, the subset $\F_i(2n,2n)$ is defined by
\[
\F_i(2n,2n)=\F(2n,2n)\cap\B_i(2n,2n).
\]
Note that $\F_i(2n,2n)$ is empty if $i$ is odd. We have following immediate observation.

\medskip
\begin{lem} \label{lem:all-even} For $0\le j\le n$ and any path $\omega\in\B_j(n,n)$, the following properties hold.
\begin{enumerate}
\item $\gamma(\omega)\in\F_{2j}(2n,2n)$ and $\sump(\gamma(\omega))=2\cdot\sump(\omega)$.
\item $|\F_{2j}(2n,2n)|=|\B_j(n,n)|$ and $|\F_{2j+1}(2n,2n)|=0$.
\item The set $\F(2n,2n)$ consists of all the paths without odd peaks and odd valleys in $\B(2n,2n)$.
\end{enumerate}
\end{lem}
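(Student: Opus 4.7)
The three parts of the lemma are closely linked, and my plan is to dispatch (i) and (ii) by unpacking the definition of $\gamma$, then spend the bulk of the argument on (iii), which is the only substantive claim.

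For part (i), the point is that $\omega\in\B_j(n,n)$ means $\omega$ begins with the prefix $\N^j\E$. Duplicating each step gives $\gamma(\omega)$ the prefix $\N^{2j}\E\E$, so it passes through $(0,2j)$ and $(1,2j)$, placing it in $\B_{2j}(2n,2n)$, hence in $\F_{2j}(2n,2n)$. The $\sump$ claim then follows from a direct correspondence of peaks: a peak of $\omega$ at the junction of an $\N$-run and an $\E$-run at point $(x,y)$ becomes the peak of $\gamma(\omega)$ at $(2x,2y)$ between the duplicated $\N\N$ and $\E\E$ runs, and every peak of $\gamma(\omega)$ arises this way. Summing coordinates over all peaks gives $\sump(\gamma(\omega))=2\cdot\sump(\omega)$.

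For part (ii), the map $\omega\mapsto\gamma(\omega)$ is clearly injective, since $\omega$ is recovered by halving each maximal run, so combined with (i) we obtain $|\F_{2j}(2n,2n)|=|\B_j(n,n)|$. Since every image $\gamma(\omega)$ visits only even lattice points $(2a,2b)$, the odd point $(0,2j+1)$ never lies on such a path, giving $|\F_{2j+1}(2n,2n)|=0$.

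For part (iii), one inclusion is immediate from the construction of $\gamma$: every maximal run in $\gamma(\omega)$ has even length, so each peak or valley occurs between two even-length runs and hence sits at an even lattice point. The reverse inclusion is where the real work lies. Given $\pi\in\B(2n,2n)$ with no odd peaks and no odd valleys, consider its primal factorization $\pi=\mu_0\mu_1\cdots\mu_d$. The junction between $\mu_i$ and $\mu_{i+1}$ is a peak or valley of $\pi$ at coordinate sum $|\mu_0|+\cdots+|\mu_i|$, which by hypothesis is even for every $0\le i\le d-1$. A short induction then forces each $|\mu_i|$ with $i\le d-1$ to be even, and since the total length $4n$ is even, $|\mu_d|$ is even as well. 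Halving each run of $\pi$ yields a path $\omega\in\B(n,n)$ with $\gamma(\omega)=\pi$. The main obstacle I anticipate is the bookkeeping in this converse direction: one must carefully handle the edge cases where $\mu_0$ is empty (so $\pi$ starts with an east step and $|\mu_0|=0$ is trivially even) and where $d=0$ (so $\pi$ is a single $\N$- or $\E$-strip), and verify that halving each run genuinely yields a legal grand Dyck path in $\B(n,n)$, which is automatic because the halved path has $n$ north and $n$ east steps and grand Dyck paths are unrestricted.
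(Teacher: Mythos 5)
The paper treats this lemma as an unproved ``immediate observation,'' so there is no official argument to compare against; your proof of (i) and (iii) is exactly the intended reading of the construction and is correct. In particular, the key point in (iii) is handled properly: peaks and valleys of $\pi$ sit at the junctions of the primal factorization, so the hypothesis that all of them are even forces every partial sum $|\mu_0|+\cdots+|\mu_i|$ ($i\le d-1$) to be even, hence every $|\mu_i|$ is even (including $|\mu_d|$, since the total length $4n$ is even), and halving each run produces $\omega\in\B(n,n)$ with $\gamma(\omega)=\pi$; the converse inclusion and the $\sump$-doubling in (i) are also argued correctly.

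The one genuine flaw is your justification of $|\F_{2j+1}(2n,2n)|=0$ in (ii). The assertion that ``every image $\gamma(\omega)$ visits only even lattice points'' is false: for instance $\gamma(\N\E)=\N\N\E\E$ passes through the odd point $(0,1)$. What is true is that every maximal run of $\gamma(\omega)$ has even length, so its peaks, valleys, and run junctions are even. The correct argument is that membership in $\B_{2j+1}(2n,2n)$ requires passing through both $(0,2j+1)$ and $(1,2j+1)$, i.e.\ having an initial north run of the odd length $2j+1$ (equivalently, an odd peak at $(0,2j+1)$), which no $\gamma(\omega)$ can have; this also follows from your part (iii). With that one-line repair, and the small observation needed for $|\F_{2j}(2n,2n)|=|\B_j(n,n)|$ that any $\gamma(\omega)\in\F_{2j}$ forces the initial north run of $\omega$ to have length exactly $j$, so that $\omega\in\B_j(n,n)$ (surjectivity onto $\F_{2j}$, not just injectivity into it), your proof is complete.
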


Now, we construct the involution $\Phi_1$ on $\B(2n,2n)$.

\smallskip
\noindent
{\bf Algorithm A}

Given a path $\pi\in\B(2n,2n)$, let $\pi=\mu_0\mu_1\cdots\mu_d$ be the primal factorization of $\pi$. If every segment $\mu_i$ contains an even number of steps then $\Phi_1(\pi)=\pi$. Otherwise, find the greatest integer $k$ such that $\mu_k$ contains an odd number of steps. The path $\Phi_1(\pi)$ is obtained from $\pi$ by interchanging the first step of $\mu_k$ and the last step of $\mu_{k-1}$.

\begin{exa} \label{exa:case4n} {\rm Let $\pi$ be the path shown on the left hand side of Figure \ref{fig:sump-reverse-1}, with the primal factorization $\pi=\mu_0\mu_1\cdots\mu_5$. Then $\mu_4=\N\N\N$ is the last segment of odd length. Hence $\Phi_1(\pi)$ is obtained from $\pi$ by interchanging the first step of $\mu_4$ and the last step of $\mu_3$, shown on the right hand side of Figure \ref{fig:sump-reverse-1}.
}
\end{exa}

\begin{figure}[ht]
\begin{center}
\psfrag{Phi-1}[][b][1]{$\Phi_1$}
\includegraphics[width=3.25in]{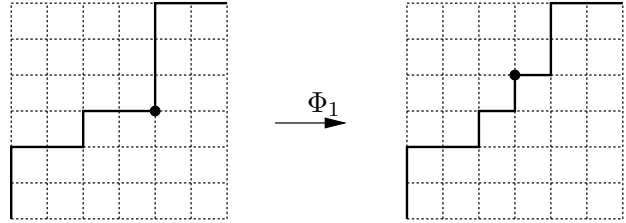}
\end{center}
\caption{\small An example for $\sump$-parity-reversing involution on grand Dyck paths.}
\label{fig:sump-reverse-1}
\end{figure}

\smallskip
\begin{lem} \label{lem:preserve-mu0} For the primal factorization $\pi=\mu_0\mu_1\cdots\mu_d$ of a path $\pi\in\B(2n,2n)$, if $k$ is the greatest integer such that $\mu_k$ contains an odd number of steps then $k\neq 0$ and $k\neq 1$, i.e., $\Phi_1(\pi)$ preserves the first segment $\mu_0$ of $\pi$.
\end{lem}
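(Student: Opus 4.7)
The plan is to exploit the parity constraint built into $\B(2n,2n)$: both the total number of $\N$ steps and the total number of $\E$ steps equal $2n$, which is even. Since the primal factorization splits the steps according to parity of index (even-indexed $\mu_{2i}$ are $\N$-segments and odd-indexed $\mu_{2i+1}$ are $\E$-segments), this forces parity conditions on the segments separately.

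First I would observe that
\[
|\mu_0|+|\mu_2|+|\mu_4|+\cdots = 2n \quad\text{and}\quad |\mu_1|+|\mu_3|+|\mu_5|+\cdots = 2n,
\]
so among the $\N$-segments an even number have odd length, and likewise among the $\E$-segments. Consequently, the set of indices $i$ with $|\mu_i|$ odd has even cardinality within each parity class of $i$.

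Next I would rule out $k=0$. If $k=0$ were the largest index of an odd-length segment, then $|\mu_0|$ would be odd while $|\mu_2|,|\mu_4|,\dots$ are all even; but then $|\mu_0|+|\mu_2|+\cdots$ would be odd, contradicting its equality to $2n$. The same kind of argument rules out $k=1$: it would force $|\mu_1|$ to be the unique odd-length segment among $\mu_1,\mu_3,\mu_5,\dots$, making $|\mu_1|+|\mu_3|+\cdots$ odd and again contradicting the value $2n$.

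Since the argument is purely a parity count on segment lengths, there is no real obstacle; the only thing to be careful about is the convention that $\mu_0$ may be empty (length zero, which is even), so when $\mu_0$ is empty the $\N$-segments are $\mu_2,\mu_4,\dots$ and the same parity conclusion still applies. From $k\ge 2$ it follows that $\Phi_1$ only modifies steps inside $\mu_{k-1}$ and $\mu_k$ with $k-1\ge 1$, so the initial segment $\mu_0$ is untouched, which is exactly the statement.
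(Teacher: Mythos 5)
Your parity argument is correct and is exactly the idea behind the paper's one-line proof, which simply notes that the assertion follows from $\pi$ having $2n$ east steps and $2n$ north steps; you have merely spelled out the resulting parity count on the $\N$-segments $\mu_0,\mu_2,\dots$ and the $\E$-segments $\mu_1,\mu_3,\dots$ in detail. Your added care about an empty $\mu_0$ and the conclusion $k\ge 2$ is fine and consistent with the paper.
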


\begin{proof}
The assertion follows from the fact that $\pi$ has $2n$ east steps and $2n$ north steps.
\end{proof}

\smallskip
\begin{pro} \label{pro:parity-reversing-4n} For $0\le i\le 2n$, the map $\Phi_1$ establishes a refined involution on $\B_i(2n,2n)-\F_i(2n,2n)$. Moreover, a path $\pi$ is carried to a path $\Phi_1(\pi)$ such that $\sump(\Phi_1(\pi))$ has the opposite parity of $\sump(\pi)$.
\end{pro}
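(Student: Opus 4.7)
The plan is to verify three assertions: (a) $\Phi_1$ sends $\B_i(2n,2n)\setminus\F_i(2n,2n)$ into itself, (b) $\Phi_1\circ\Phi_1$ is the identity on this set, and (c) $\sump$ changes parity.

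For (a), the essential point is that the swap leaves the initial segment $\mu_0$ unchanged. By Lemma \ref{lem:preserve-mu0} we have $k\ge 2$, so the two swapped positions $p,p+1$ satisfy $p=|\mu_0|+\cdots+|\mu_{k-1}|\ge|\mu_0|+|\mu_1|$. A short parity argument shows that if $k=2$ then $|\mu_1|$ must be even, and hence $|\mu_1|\ge 2$: indeed, if $k=2$ then $|\mu_j|$ is even for every $j\ge 3$, so $2n=|\mu_1|+|\mu_3|+\cdots\equiv|\mu_1|\pmod 2$. Thus the position $|\mu_0|+1$ is untouched by the swap and $\mu_0$ survives as the initial $\N$-block of $\Phi_1(\pi)$, placing $\Phi_1(\pi)$ in $\B_i(2n,2n)$. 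The fact that $\Phi_1(\pi)\notin\F_i(2n,2n)$ will follow from (b) together with the fact that $\Phi_1$ fixes every element of $\F_i(2n,2n)$.

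For (b), write $a=|\mu_k|$ (which is odd) and $b=|\mu_{k-1}|$. I would split into four cases based on whether $a=1$ or $a>1$ and whether $b=1$ or $b>1$, and in each case write out the new primal factorization of $\Phi_1(\pi)$ explicitly. When $b>1$ and $a>1$ the swap creates two new singleton blocks between the surviving pieces $\E^{b-1}$ and $\N^{a-1}$; when exactly one of $a,b$ equals $1$ the newly introduced step merges with the adjacent $\mu_{k\pm 2}$; when $a=b=1$ the swap simultaneously extends $\mu_{k-2}$ and $\mu_{k+1}$ by one step each. Using that $|\mu_j|$ is even for all $j>k$, one identifies the new largest-index odd-length block $k'$ of $\Phi_1(\pi)$ and checks that Algorithm A applied at $k'$ acts on exactly the positions $p,p+1$ and restores $\pi$. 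This gives $\Phi_1^2=\mathrm{id}$.

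For (c), first observe that $p=|\mu_0|+\cdots+|\mu_{k-1}|$ is odd: since the total length is $4n$ and $|\mu_j|$ is even for all $j>k$, we have $p\equiv 4n-|\mu_k|\equiv -a\pmod 2$, which is odd. A direct accounting of the peaks gained and lost at positions $p-1,p,p+1$ in each of the four cases of (b) shows that
\[
\sump(\Phi_1(\pi))-\sump(\pi)\in\{\,p,\ 1,\ -1,\ -p\,\},
\]
with the boundary cases $k=d$ or $|\mu_0|=0$ handled analogously. Since $p$ is odd, all four possible values are odd integers, so $\sump$ changes parity.

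The principal obstacle is the bookkeeping in (b): the merging behavior when $a=1$ or $b=1$ alters the number of primal segments and shifts the indexing, so one must confirm that Algorithm A applied to $\Phi_1(\pi)$ selects precisely the pair of positions whose swap produced $\Phi_1(\pi)$ from $\pi$. Once this is verified, the involution claim reduces to the trivial fact that swapping the same adjacent pair twice is the identity, and part (c) reduces to the single parity computation for $p$.
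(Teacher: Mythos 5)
Your proposal is correct in substance but takes a genuinely more computational route than the paper. You work entirely with the primal factorization: you re-derive the preservation of the initial block (making explicit the point, only implicit in Lemma \ref{lem:preserve-mu0}, that $k=2$ forces $|\mu_1|$ even and hence $|\mu_1|\ge 2$), you prove $\Phi_1^2=\mathrm{id}$ by a case analysis on whether $|\mu_k|$ and $|\mu_{k-1}|$ equal $1$, and you get the parity claim from the observation that $p=|\mu_0|+\cdots+|\mu_{k-1}|$ is odd together with a local accounting giving $\sump(\Phi_1(\pi))-\sump(\pi)\in\{\pm 1,\pm p\}$. The paper compresses all of this into one geometric reformulation: since $|\mu_j|$ is even for every $j>k$, the junction of $\mu_{k-1}$ and $\mu_k$ is exactly the \emph{last odd peak or valley} of $\pi$ (which exists when $\pi\notin\F(2n,2n)$ by Lemma \ref{lem:all-even}), so Algorithm A simply flips that last odd peak into the last odd valley (or vice versa) at the same lattice point, leaving everything to its right unchanged; applying the algorithm again therefore selects the same point and restores $\pi$, and since the only peak created or destroyed at an odd position is the one at $p$ (the other affected positions $p\pm 1$ being even), $\sump$ changes parity. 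Your route buys literal fidelity to Algorithm A as stated, at the price of the merging/splitting bookkeeping in step (b), which you only sketch; for the record, the sketch does close: in each case (including the symmetric peak/valley versions and the boundary case $k=d$) the new largest-index odd block of $\Phi_1(\pi)$ is the singleton or merged block whose first step sits at position $p+1$, so Algorithm A swaps back precisely the pair $(p,p+1)$. The paper's ``last odd peak or valley'' viewpoint is worth adopting, since it makes both the involution and the parity statements immediate and eliminates the case analysis you identify as the principal obstacle.
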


\begin{proof} Given a path $\pi\in\B(2n,2n)$, suppose $\Phi_1(\pi)\neq \pi$. By Lemma \ref{lem:all-even}, $\pi$ contains a peak (or valley) which is an odd lattice point. By algorithm A, we find the last odd peak (or valley) $p$. The path $\Phi(\pi)$ is obtained by interchanging the $\N$ and $\E$ steps adjacent at $p$. This changes the last odd peak (or valley) of $\pi$ into the last odd valley (or peak) of $\Phi(\pi)$. Moreover, by Lemma \ref{lem:preserve-mu0},  $\Phi_1$ is an involution, restricted to each subset $\B_i(2n,2n)-\F_i(2n,2n)$. We observe that there is exactly one odd lattice point affected. Hence $\sump(\Phi_1(\pi))$ has the opposite parity of $\sump(\pi)$.
\end{proof}

\noindent
\emph{Proof of (i) of Theorem \ref{thm:ARM-identity-for-lead-InA321}.}
\begin{align*}
\sum_{\sigma\in\I_{4n}(321)} (-1)^{\maj(\sigma)} q^{\lead(\sigma)} &= \sum_{i=0}^{2n} \left( \sum_{\pi\in\B_i(2n,2n)} (-1)^{\sump(\sigma)} \right) q^{i+1} \\
  &= \sum_{j=0}^{n} \left(\sum_{\pi\in\F_{2j}(2n,2n)}q^{2j+1} \right)\\
  &= \sum_{j=0}^{n} |\B_j(n,n)|q^{2j+1} \\
  &= \frac{1}{q}\sum_{\sigma\in\I_{2n}(321)} q^{2\cdot\lead(\sigma)}.
\end{align*}

\subsection{The case $\Phi_2:\B(2n+1,2n+1)\rightarrow\B(2n+1,2n+1)$} To prove (ii) of Theorem \ref{thm:ARM-identity-for-lead-InA321}, we shall establish a $\sump$-parity-reversing involution  $\Phi_2:\B_i(2n+1,2n+1)\rightarrow\B_i(2n+1,2n+1)$ for $0\le i\le 2n+1$. Let $\F(2n+1,2n+1)\subseteq\B(2n+1,2n+1)$ be the set of fixed points of the map $\Phi_2$.  For $0\le i\le 2n+1$, we define
\[
\F_i(2n+1,2n+1)=\F(2n+1,2n+1)\cap\B_i(2n+1,2n+1).
\]
The set $\F(2n+1,2n+1)$ can be constructed from $\B(n,n+1)$ as follows.
For each path $\omega\in\B_j(n,n+1)$ ($0\le j\le n$), we form a path $\gamma(\omega)$ by duplicating every step of $\omega$. Note that $\gamma(\omega)$ is from $(0,0)$ to $(2n+2,2n)$ with the prefix $\N^{2j}\E\E$. Factorize $\gamma(\omega)$ as $\gamma(\omega)=\N^{2j}\E\E\beta$. Then we create two paths $\phi_1(\omega),\phi_2(\omega)\in\B(2n+1,2n+1)$ from $\gamma(\omega)$ by
\begin{equation*} \label{eqn:phi}
\phi_1(\omega) =\N^{2j}\E\N\beta, \qquad
\phi_2(\omega) =\N^{2j+1}\E\beta,
\end{equation*}
i.e., $\phi_1(\omega)$ (respectively, $\phi_2(\omega)$) is obtained from $\gamma(\omega)$ by changing the second (respectively, first) east step into a north step. For example,
let $\omega=\N\E\E\N\E\in\B_1(2,3)$. Then the path $\gamma(\omega)$ is shown on the left hand side of Figure \ref{fig:lead-duplicate-2} and the paths $\phi_1(\omega), \phi_2(\omega)$ are shown on the right hand side of Figure \ref{fig:lead-duplicate-2}.

\begin{figure}[ht]
\begin{center}
\psfrag{gamma}[][b][1]{$\gamma(\omega)$}
\psfrag{phi-1}[][b][1]{$\phi_1(\omega)$}
\psfrag{phi-2}[][b][1]{$\phi_2(\omega)$}
\includegraphics[width=3.75in]{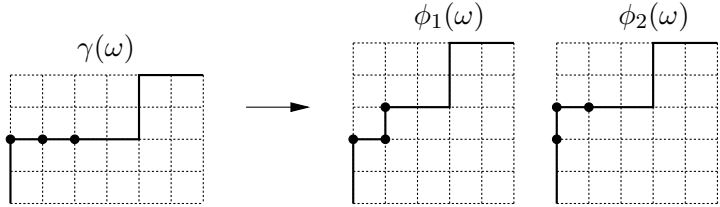}
\end{center}
\caption{\small Construction of the paths $\phi_1(\omega)$ and $\phi_2(\omega)$ for $\omega=\N\E\E\N\E$.}
\label{fig:lead-duplicate-2}
\end{figure}

For $0\le j\le n$, define
\begin{align*}
\F_{2j}(2n+1,2n+1) &=\{\phi_1(\omega): \omega\in\B_j(n,n+1)\}, \\
\F_{2j+1}(2n+1,2n+1) &=\{\phi_2(\omega): \omega\in\B_j(n,n+1)\}.
\end{align*}

Note that the segment $\beta$ of $\phi_1(\omega), \phi_2(\omega)$ goes from $(1,2\ell+1)$ to $(2n+1,2n+1)$ and that every segment in the primal factorization of $\beta$ contains an even number of steps.
We have following immediate observation.
\smallskip
\begin{lem} \label{lem:unique-odd} For $0\le j\le n$ and any path $\omega\in\B_j(n,n+1)$,  the following properties hold.
\begin{enumerate}
\item The path $\phi_1(\omega)\in\B_{2j}(2n+1,2n+1)$ contains a unique odd valley $(1,2j)$ and no odd peaks.
\item The path $\phi_2(\omega)\in\B_{2j+1}(2n+1,2n+1)$ contains a unique odd peak $(0,2j+1)$ and no odd valleys.
\item $|\F_{2j}(2n+1,2n+1)|=|\F_{2j+1}(2n+1,2n+1)|=|\B_j(n,n+1)|$.
\end{enumerate}
\end{lem}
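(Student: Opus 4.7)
The strategy is to exploit the fact that $\gamma(\omega)$ is obtained by step-duplication, so every maximal run in its primal factorization has even length, and consequently every peak and every valley of $\gamma(\omega)$ lies at an even lattice point. Writing $\gamma(\omega)=\N^{2j}\E\E\beta$, the paths $\phi_1(\omega)=\N^{2j}\E\N\beta$ and $\phi_2(\omega)=\N^{2j+1}\E\beta$ reposition the starting point of the $\beta$-segment from $(2,2j)$ to $(1,2j+1)$, a shift by $(-1,+1)$ that leaves the parity of every lattice point traversed in the $\beta$-portion unchanged.

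For (i), I would first read off the visible prefix $\N^{2j}\E$ to conclude $\phi_1(\omega)\in\B_{2j}(2n+1,2n+1)$, and note that the turn $\E\N$ occurring immediately after that prefix creates a valley at the odd lattice point $(1,2j)$. To establish uniqueness, I would split on whether $\beta$ begins with $\N$ or $\E$. If the first $\E$-run of $\gamma(\omega)$ has length exactly two, then $\beta$ begins $\N^{2\ell}\cdots$ and in $\phi_1(\omega)$ the inserted $\N$ merges with this initial north run to produce a peak at the even point $(1,2j+1+2\ell)$. If the first $\E$-run has length $2k\ge 4$, then $\beta$ begins $\E^{2k-2}\cdots$ and the inserted $\N$ becomes an isolated north step generating an even peak $(1,2j+1)$ together with an even valley at $(2k-1,2j+1)$. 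In either case the remaining peaks and valleys of $\phi_1(\omega)$ inside the $\beta$-portion are the images of those of $\gamma(\omega)$ under the parity-preserving shift, hence lie at even lattice points. The analysis for (ii) is parallel, with the peak $(0,2j+1)$ produced by the prefix $\N^{2j+1}\E$ playing the role of the unique odd peak.

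For (iii), I would observe that $\phi_1$ and $\phi_2$ are manifestly injective on $\B_j(n,n+1)$: given the image $\phi_1(\omega)$, the index $j$ is determined by membership in $\B_{2j}(2n+1,2n+1)$, the segment $\beta$ is read off directly as the suffix after the prefix $\N^{2j}\E\N$, and reverting that inserted $\N$ back to $\E$ yields $\gamma(\omega)$, whose preimage $\omega$ is recovered by reversing the step-duplication; the argument for $\phi_2$ is identical. Since $\F_{2j}(2n+1,2n+1)$ and $\F_{2j+1}(2n+1,2n+1)$ are by definition the images of these two maps, the cardinality identities in (iii) follow.

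The main obstacle I anticipate is the careful boundary bookkeeping in Step (i): one must check in both sub-cases that the insertion of $\N$ creates exactly one new odd peak or valley (at the prescribed location) and none elsewhere, and that the degenerate configurations such as $j=0$, $j=n$, or $n=0$ (so that $\beta$ is empty) still yield exactly one odd peak or valley at the claimed location.
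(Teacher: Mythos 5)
Your proof is correct and follows essentially the same route the paper intends: the paper records this lemma as an immediate observation resting on the fact that $\gamma(\omega)$ has all-even runs (so all its peaks and valleys are even) and that the segment $\beta$ starts at an even lattice point with every run of even length, which is exactly the parity-tracking argument you carry out in detail. Your explicit case split on the length of the first east run and the injectivity argument for (iii) simply fill in the bookkeeping the paper omits.
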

It follows that $\sump(\phi_1(\omega))$ is even and $\sump(\phi_2(\omega))$ is odd. Note that every path $\pi\in \B(2n+1,2n+1)$ contains at least one odd valley or odd peak since $\pi$ has $2n+1$ east steps and $2n+1$ north steps. In fact,
the set $\F(2n+1,2n+1)$ consists of all the paths in $\B(2n+1,2n+1)$ either containing a unique odd valley in the line $x=1$ and no odd peaks, or containing a unique odd peak in the line $x=0$ and no odd valleys.

Now, we construct the involution $\Phi_2$ on $\B(2n+1,2n+1)$.

\smallskip
\noindent
{\bf Algorithm B}

Given a path $\pi\in\B(2n+1,2n+1)$, let $\mu_0\mu_1\cdots\mu_d$ be the primal factorization of $\pi$. According to the parity of the length of $\mu_0$, there are two cases.

Case 1.  $\mu_0$ is odd, say $\mu_0=\N^{2j+1}$. Find the greatest integer $k\ge 1$ such that $\mu_k$ contains an odd number of steps. If $k=1$ then let $\Phi_2(\pi)=\pi$. Otherwise, the path $\Phi_1(\pi)$ is obtained from $\pi$ by interchanging the first step of $\mu_k$ and the last step of $\mu_{k-1}$.

Case 2. $\mu_0$ is even, say $\mu_0=\N^{2j}$. If $\mu_1=\E$ and $\mu_2$ is the only other segment containing an odd number of steps (i.e., $\mu_t$ is of even length for all $t\ge 3$) then let $\Phi_2(\pi)=\pi$. Otherwise, find the greatest integer $k\ge 3$ such that $\mu_k$ contains an odd number of steps. Then the path $\Phi_2(\pi)$ is obtained from $\pi$ by interchanging the first step of $\mu_k$ and the last step of $\mu_{k-1}$.

It is obvious that the map $\Phi_2$ preserves the segment $\mu_0$, i.e., $\Phi_2$ is a map restricted to each subset $\B_i(2n+1,2n+1)$ for $0\le i\le 2n+1$.

\begin{exa} \label{exa:case4n+2} {\rm Let $\pi$ be the path shown on the left hand side of Figure \ref{fig:sump-reverse-2A}, with the primal factorization $\pi=\mu_0\mu_1\cdots\mu_7$. Then $\mu_5=\E\E\E$ is the last segment of odd length. Hence $\Phi_2(\pi)$ is obtained from $\pi$ by interchanging the first step of $\mu_5$ and the last step of $\mu_4$, shown on the right hand side of Figure \ref{fig:sump-reverse-2A}.
}
\end{exa}

\begin{figure}[ht]
\begin{center}
\psfrag{Phi-2}[][b][1]{$\Phi_2$}
\includegraphics[width=3.25in]{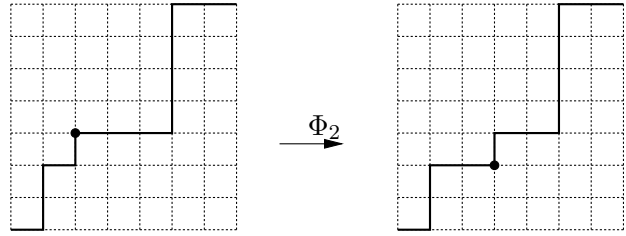}
\end{center}
\caption{\small An example for the map $\Phi_2:\B(7,7)\rightarrow\B(7,7)$.}
\label{fig:sump-reverse-2A}
\end{figure}


By Lemma the same argument as in the proof of Proposition \ref{pro:parity-reversing-4n}, the assertion is proved.

\begin{pro} \label{pro:parity-reversing-4n+2} For $0\le i\le 2n+1$, the map $\Phi_2$ establish a refined involution on $\B_i(2n+1,2n+1)-\F_i(2n+1,2n+1)$. Moreover, a path $\pi$ is carried to a path $\Phi_2(\pi)$ such that $\sump(\Phi_2(\pi))$ has the opposite parity of $\sump(\pi)$.
\end{pro}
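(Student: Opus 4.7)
The plan is to mirror the proof of Proposition \ref{pro:parity-reversing-4n}, replacing the single-case analysis there with the two-case Algorithm~B and invoking Lemma~\ref{lem:unique-odd} to pin down the fixed points. First I would verify that $\Phi_2$ restricts to each subset $\B_i(2n+1,2n+1)$: whenever the algorithm performs a swap, the affected steps sit strictly inside $\mu_{k-1}\mu_k$ with $k\ge 2$ in Case~1 and $k\ge 3$ in Case~2, so the initial prefix (and hence the passage through $(0,i)$ and $(1,i)$ that defines $\B_i$) is untouched.

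The crucial step is to reinterpret the swap geometrically. Because $\pi\in\B(2n+1,2n+1)$ has even total length $4n+2$ and $|\mu_t|$ is even for every $t>k$ by the maximality of $k$, the partial sum $\sum_{t=0}^{k-1}|\mu_t|\equiv 4n+2-|\mu_k|\equiv 1\pmod 2$. Hence the junction point between $\mu_{k-1}$ and $\mu_k$ is an odd lattice point of $\pi$, and it is in fact the last odd peak (when $k$ is odd, so $\mu_{k-1}$ is an $\N$-segment and $\mu_k$ an $\E$-segment) or the last odd valley (when $k$ is even) of $\pi$. Interchanging the two steps adjacent to this junction converts that odd peak into an odd valley (or vice versa) at an adjacent lattice point of the same coordinate-sum parity, while leaving every other peak and valley intact. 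Since each peak at $(x,y)$ sits at step-count $x+y$ and contributes $x+y$ to $\sump$, one has $\sump(\pi)\equiv\#\{\text{odd peaks of }\pi\}\pmod 2$; the swap changes this count by exactly one, so $\sump(\Phi_2(\pi))$ has the opposite parity of $\sump(\pi)$.

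For involutivity, the newly-created odd valley (resp.\ odd peak) in $\Phi_2(\pi)$ remains the last odd lattice point of the new path, because no odd peaks or valleys have been created beyond it and the partial-sum identity above continues to single out the same junction. Algorithm~B applied to $\Phi_2(\pi)$ therefore selects the same index $k$ and reverses the earlier swap, giving $\Phi_2^2=\mathrm{id}$ on $\B_i(2n+1,2n+1)-\F_i(2n+1,2n+1)$. The main obstacle is to verify that the ``do-nothing'' clauses of Algorithm~B describe exactly the set $\F_i(2n+1,2n+1)$ supplied by Lemma~\ref{lem:unique-odd}: concretely, the Case~1 fixed paths must coincide with the $\phi_2$-type images (unique odd peak on $x=0$, no odd valleys) and the Case~2 fixed paths with the $\phi_1$-type images (unique odd valley on $x=1$, no odd peaks). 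This amounts to a parity bookkeeping on the segment lengths, driven by the fact that $\B(2n+1,2n+1)$ contains an odd number of $\N$-steps and an odd number of $\E$-steps, which forces the only admissible configurations avoiding a swap to be precisely those described by $\phi_1$ and $\phi_2$.
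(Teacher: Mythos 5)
Your argument follows the paper's own route: the swap prescribed by Algorithm~B acts at the last odd peak or valley, toggles exactly one odd lattice point (so $\sump$ changes parity), preserves the prefix $\mu_0$ and hence each $\B_i$, and is undone when the algorithm is applied again — your partial-sum computation is simply a more explicit version of the paper's appeal to the argument of Proposition \ref{pro:parity-reversing-4n}. The fixed-point identification you defer to ``parity bookkeeping'' is likewise only asserted, not proved, in the paper (Lemma \ref{lem:unique-odd} and the surrounding discussion), so your proposal is correct and at essentially the same level of detail as the paper's proof.
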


\begin{proof}
Given a path $\pi\in\B(2n+1,2n+1)$, suppose $\Phi_2(\pi)\neq \pi$. By Lemma \ref{lem:unique-odd}, $\pi$ either contains an odd peak  $(x,y)$ with $x>0$ or contains an odd valley $(x',y')$ with $x'>1$. By algorithm B, we find the last odd peak (or valley) $p$ and construct the path $\Phi_2(\pi)$ by interchanging the $\N$ and $\E$ steps adjacent at $p$. By the same argument as in the proof of Proposition \ref{pro:parity-reversing-4n}, the assertion is proved.
\end{proof}

\noindent
\emph{Proof of (ii) of Theorem \ref{thm:ARM-identity-for-lead-InA321}.}
\begin{align*}
\sum_{\sigma\in\I_{4n+2}(321)} (-1)^{\maj(\sigma)} q^{\lead(\sigma)} &= \sum_{i=0}^{2n+1} \left( \sum_{\pi\in\B_i(2n+1,2n+1)} (-1)^{\sump(\sigma)} \right)q^{i+1} \\
  &= \sum_{j=0}^{n} \left(\sum_{\pi\in\F_{2j}(2n+1,2n+1)}q^{2j+1}- \sum_{\pi\in\F_{2j+1}(2n+1,2n+1)}q^{2j+2} \right) \\
  &= \sum_{j=0}^{n} |\B_j(n,n+1)|q^{2j+1}- \sum_{j=0}^{n} |\B_j(n,n+1)|q^{2j+2} \\
  &= \left(\frac{1}{q}-1 \right) \sum_{\sigma\in\I_{2n+1}(321)} q^{2\cdot\lead(\sigma)}.
\end{align*}

\smallskip
\subsection{The case $\Phi_3:\B(2n+1,2n+2)\rightarrow\B(2n+1,2n+2)$} To prove (iii) of Theorem \ref{thm:ARM-identity-for-lead-InA321}, we shall establish a $\sump$-parity-reversing involution  $\Phi_3:\B_i(2n+1,2n+2)\rightarrow\B_i(2n+1,2n+2)$ for $0\le i\le 2n+1$. Let $\F(2n+1,2n+2)\subseteq\B(2n+1,2n+2)$ be the set of fixed points of the map $\Phi_3$.  For $0\le i\le 2n+1$, we define
\[
\F_i(2n+1,2n+2)=\F(2n+1,2n+2)\cap\B_i(2n+1,2n+2).
\]

The set $\F(2n+1,2n+2)$ can be constructed from $\B(n,n+1)$ as follows.
For each path $\omega\in\B_j(n,n+1)$  ($0\le j\le n$), we form a path $\gamma(\omega)$ by duplicating every step of $\omega$. Note that $\gamma(\omega)$ is from $(0,0)$ to $(2n+2,2n)$ with the prefix $\N^{2j}\E\E$. Factorize $\gamma(\omega)$ as $\gamma(\omega)=\N^{2j}\E\E\beta$. Then we create three paths $\psi_0(\omega),\psi_1(\omega),\psi_2(\omega)\in\B(2n+1,2n+2)$ from $\gamma(\omega)$ by
\begin{align*}
\psi_0(\omega) &=\gamma(\omega)\N, \\
\psi_1(\omega) &=\N^{2j}\E\N\beta\E, \\
\psi_2(\omega) &=\N^{2j+1}\E\beta\E.
\end{align*}
Note that $\psi_0(\omega)$ is obtained from $\gamma(\omega)$ by appending a north step in the end and that  $\psi_1(\omega)$ (respectively, $\psi_2(\omega)$) is obtained from $\gamma(\omega)$ inserting a north step after (respectively, before) the first east step and moving the second east step to the end. For example,
for $\omega=\N\E\E\N\E\in\B_1(2,3)$, the paths $\psi_0(\omega)$, $\psi_1(\omega)$ and $\psi_2(\omega)$ are shown in Figure \ref{fig:lead-duplicate-3}.

\begin{figure}[ht]
\begin{center}
\psfrag{psi-0}[][b][1]{$\psi_0$}
\psfrag{psi-1}[][b][1]{$\psi_1$}
\psfrag{psi-2}[][b][1]{$\psi_2$}
\includegraphics[width=3.6in]{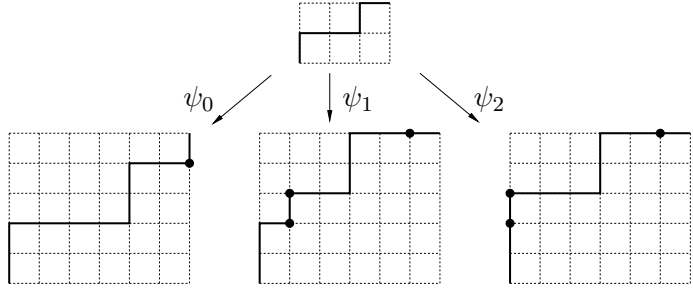}
\end{center}
\caption{\small Construction of the paths $\psi_0(\omega), \psi_1(\omega)$ and $\psi_2(\omega)$ for $\omega=\N\E\E\N\E$.}
\label{fig:lead-duplicate-3}
\end{figure}

\smallskip
We define the refinement of the set $\F(2n+1,2n+2)$. For $0\le j\le n$ let
\begin{align*}
\F_{2j}(2n+1,2n+2) &=\{\psi_0(\omega),\psi_1(\omega): \omega\in\B_j(n,n+1)\}, \\
\F_{2j+1}(2n+1,2n+2) &=\{\psi_2(\omega): \omega\in\B_j(n,n+1)\}.
\end{align*}

We have the following properties of the fixed points $\F(2n+1,2n+2)$.

\smallskip
\begin{lem} For $0\le j\le n$ and any path $\omega\in\B_j(n,n+1)$, the following properties hold.
\begin{enumerate}
\item The path $\psi_0(\omega)$ contains no odd peaks and odd valleys.
\item The path $\psi_1(\omega)$ contains a unique odd valley $(1,2j)$ and no odd peaks.
\item The path $\psi_2(\omega)$ contains a unique odd peak $(0,2j+1)$ and no odd valleys.
\item $|\F_{2j}(2n+1,2n+2)|=2|\B_j(n,n+1)|$ and $|\F_{2j+1}(2n+1,2n+2)|=|\B_j(n,n+1)|$.
\end{enumerate}
\end{lem}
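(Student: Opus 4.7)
The plan is to isolate a single structural fact about $\gamma(\omega)$ and then reduce (i)--(iii) to local inspections at each construction. Since $\gamma(\omega)$ is built from $\omega$ by duplicating every step, every run in the primal factorization of $\gamma(\omega)$ has even length. Hence, at any interior peak or valley of $\gamma(\omega)$, the $x$-coordinate is a sum of complete $\E$-runs and the $y$-coordinate is a sum of complete $\N$-runs, so both are even and in particular $x+y$ is even. This immediately yields (i): the path $\psi_0(\omega)=\gamma(\omega)\N$ inherits the extrema of $\gamma(\omega)$, and when $\gamma(\omega)$ ends with $\E$ it additionally acquires one new valley at $(2n+2,2n)$ of even sum $4n+2$.

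For (ii), I would factor $\psi_1(\omega)=\N^{2j}\E\N\beta\E$ and walk through the transitions. The possible extrema are the peak $(0,2j)$ when $j\ge 1$ (sum $2j$, even), the new valley $(1,2j)$ created by the inserted $\E\N$ (sum $2j+1$, odd), the possible peak $(1,2j+1)$ if $\beta$ begins with $\E$ (sum $2j+2$, even), the interior extrema of $\beta$ whose positions are translated by $(-1,+1)$ from their positions in $\gamma(\omega)$ so that coordinate sums are unchanged and remain even, and a possible terminal peak $(2n+1,2n+1)$ if $\beta$ ends with $\N$ (sum $4n+2$, even). Thus $(1,2j)$ is the unique odd extremum, a valley. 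The argument for (iii) is parallel: in $\psi_2(\omega)=\N^{2j+1}\E\beta\E$, the initial peak $(0,2j+1)$ has odd sum $2j+1$, while a potential valley at $(1,2j+1)$ (if $\beta$ starts with $\N$), the interior extrema of $\beta$, and a possible closing peak $(2n+1,2n+1)$ all have even sum.

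For (iv), each of $\psi_0,\psi_1,\psi_2$ is injective on $\B_j(n,n+1)$, because $j$ is read off from the length of the initial $\N$-run and, after stripping the prescribed inserted or appended steps, one recovers $\gamma(\omega)$ and hence $\omega$ by halving each run. By (i) and (ii), the image of $\psi_0$ contains no path with an odd valley at $(1,2j)$ whereas every image of $\psi_1$ does, so the two families in $\B_{2j}(2n+1,2n+2)$ are disjoint; this gives $|\F_{2j}(2n+1,2n+2)|=2|\B_j(n,n+1)|$ and, from the injectivity of $\psi_2$, $|\F_{2j+1}(2n+1,2n+2)|=|\B_j(n,n+1)|$. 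Apart from the bookkeeping at the junctions between $\beta$ and the inserted steps---including the degenerate case $j=0$ and the two possible orientations of the first and last step of $\beta$---there is no conceptual obstacle; the main work is verifying that the even-coordinate property of $\gamma(\omega)$'s extrema propagates through each of the three constructions.
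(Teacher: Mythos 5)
Your proof is correct and matches the paper's intent: the paper states this lemma as an immediate consequence of the construction, relying precisely on the fact you isolate, namely that all peaks and valleys of the duplicated path $\gamma(\omega)$ lie at even lattice points, so that only the locally modified steps (the inserted $\N$, the moved $\E$, the appended step) can produce odd extrema. Your local case checks at the junctions and the injectivity/disjointness argument for (iv) fill in exactly the routine verification the paper omits, so there is nothing to add.
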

It follows that $\sump(\psi_0(\omega)),\sump(\psi_1(\omega))$ are even and
$\sump(\psi_2(\omega))$ is odd. In fact, $\F(2n+1,2n+2)$ consists of all the paths in $\B(2n+1,2n+2)$ containing on odd valley $(x,y)$ with $x\ge 2$ and no odd peak $(x',y')$ with $x'\ge 1$.

Now, we construct the involution $\Phi_3$ on $\B(2n+1,2n+2)$.

\smallskip
\noindent
{\bf Algorithm C}

Given a path $\pi\in\B(2n+1,2n+2)$, let $z$ denote the last step of $\pi$. Let $\pi'$ be the path obtained from $\pi$ by removing $z$.  We consider the following two cases according to the step $z$.

Case 1.  $z=\N$. Then $\pi'$ goes from $(0,0)$ to $(2n+2,2n)$. Applying algorithm A to the primal factorization of $\pi'$, we determine the path $\Phi_1(\pi')$ associated with $\pi'$ under the map $\Phi_1$.  Then the corresponding path $\Phi_3(\pi)$ is obtained from $\Phi_1(\pi')$ by appending a north step, i.e., $\Phi_3(\pi)=\Phi_1(\pi')\N\in\B(2n+1,2n+2)$.

Case 2. $z=\E$. Then $\pi'$ goes from $(0,0)$ to $(2n+1,2n+1)$. Applying algorithm B to the primal factorization of $\pi'$, we determine the path $\Phi_2(\pi')$ associated with $\pi'$ under the map $\Phi_2$. Then the corresponding path $\Phi_3(\pi)\in\B(2n+1,2n+2)$ is obtained from $\Phi_2(\pi')$ by appending an east step, i.e., $\Phi_3(\pi)=\Phi_2(\pi')\E\in\B(2n+1,2n+2)$.

\smallskip
Note that the construction of the map $\Phi_3$ in Case 1 (respectively, Case 2) of algorithm C is similar to the  construction of $\Phi_1$ by algorithm A (respectively, $\Phi_2$ by algorithm B). The following property of the map $\Phi_3$ can be proved by the same argument as in the proofs of Propositions \ref{pro:parity-reversing-4n} and \ref{pro:parity-reversing-4n+2}.

\begin{pro} \label{pro:parity-reversing-4n+3} For $0\le i\le 2n+1$, the map $\Phi_3$ establishes a refined involution on $\B_i(2n+1,2n+2)-\F_i(2n+1,2n+2)$. Moreover, a path $\pi$ is carried to a path $\Phi_3(\pi)$ such that $\sump(\Phi_3(\pi))$ has the opposite parity of $\sump(\pi)$.
\end{pro}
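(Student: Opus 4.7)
The plan is to reduce the proposition to Propositions \ref{pro:parity-reversing-4n} and \ref{pro:parity-reversing-4n+2} by exploiting the ``peel-and-reattach'' structure of Algorithm~C. Given $\pi\in\B_i(2n+1,2n+2)$ with last step $z$, write $\pi=\pi' z$. In Case~1 ($z=\N$) one has $\pi'\in\B(2n,2n+2)$, while in Case~2 ($z=\E$) one has $\pi'\in\B(2n+1,2n+1)$, which is exactly the domain of $\Phi_2$. Since $\Phi_3$ preserves $z$ by construction, the two cases are self-contained and their outputs never collide, so it suffices to treat each case separately.

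The first observation I would record is that attaching or removing a step at the very end of a lattice path can neither create nor destroy a peak, because the trailing step has no successor to pair with. Consequently $\sump(\pi)=\sump(\pi')$ and $\sump(\Phi_3(\pi))=\sump(\Phi_j(\pi'))$ for the appropriate $j\in\{1,2\}$, so the $\sump$-parity reversal for $\Phi_3$ is inherited automatically from the inner involutions. The same remark shows that $\pi$ and $\pi'$ have identical initial runs $\mu_0$, so the $\B_i$-refinement survives provided $\Phi_1$ and $\Phi_2$ themselves preserve $\mu_0$.

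Case~2 is essentially immediate: Proposition \ref{pro:parity-reversing-4n+2} applies to $\pi'\in\B_i(2n+1,2n+1)$ directly, and the identities $\psi_1(\omega)=\phi_1(\omega)\E$ and $\psi_2(\omega)=\phi_2(\omega)\E$ match the $\Phi_2$-fixed points bijectively with the ``$z=\E$'' portion of $\F_i(2n+1,2n+2)$. I expect the only real obstacle to lie in Case~1, where I need an extension of Lemma \ref{lem:preserve-mu0} to the non-square rectangle $\B(2n,2n+2)$. The remedy is that Algorithm~A makes sense for an arbitrary grand Dyck path, and the proof of Lemma \ref{lem:preserve-mu0} invokes only the parities of the total numbers of $\N$ and $\E$ steps; since both $2n$ and $2n+2$ are even, the same counting argument yields that the greatest $k$ with $|\mu_k|$ odd satisfies $k\ge 2$, hence $\mu_0$ is preserved. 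The resulting fixed set is $\{\gamma(\omega):\omega\in\B(n,n+1)\}$ and reattaching the terminal $\N$ recovers precisely the paths $\psi_0(\omega)$ in $\F_i(2n+1,2n+2)$.

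Combining the two cases, every $\pi\in\B_i(2n+1,2n+2)-\F_i(2n+1,2n+2)$ is moved by $\Phi_3$ to a path in the same $\B_i$, the $\sump$-parity is flipped, and involutivity is inherited from $\Phi_1$ and $\Phi_2$ on the truncations. The bulk of the new work is the extension of Algorithm~A to $\B(2n,2n+2)$; everything else is a direct translation of the arguments in Propositions \ref{pro:parity-reversing-4n} and \ref{pro:parity-reversing-4n+2}.
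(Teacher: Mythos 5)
Your reduction is exactly the route the paper takes: Algorithm C acts by peeling off the last step, applying $\Phi_1$ (Case 1) or $\Phi_2$ (Case 2) to the truncation, and reattaching the step, so the proposition is inherited from Propositions \ref{pro:parity-reversing-4n} and \ref{pro:parity-reversing-4n+2}. Your observation that Lemma \ref{lem:preserve-mu0} extends to $\B(2n,2n+2)$ because both step totals are even is precisely the point the paper leaves implicit, and your identification of the fixed points via $\psi_0(\omega)=\gamma(\omega)\N$, $\psi_1(\omega)=\phi_1(\omega)\E$, $\psi_2(\omega)=\phi_2(\omega)\E$ is correct. (For completeness, the extension of Lemma \ref{lem:preserve-mu0} should also record that when the last odd segment is $\mu_2$, the segment $\mu_1$ has even length at least $2$, so moving its last step cannot merge $\mu_0$ with $\mu_2$; the same parity count gives this.)

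One justification is wrong as stated: it is not true that attaching or removing a terminal step can never create or destroy a peak. In Case 2, if $\pi'$ ends with a north step, then appending the east step creates a peak at the terminal point $(2n+1,2n+1)$; moreover $\Phi_2$ may change the last step of $\pi'$, so the correction terms for $\pi$ and for $\Phi_3(\pi)=\Phi_2(\pi')\E$ need not agree. Thus in Case 2 one only has $\sump(\pi)=\sump(\pi')+\epsilon$ and $\sump(\Phi_3(\pi))=\sump(\Phi_2(\pi'))+\epsilon'$ with $\epsilon,\epsilon'\in\{0,\,4n+2\}$. The conclusion survives because the possibly created or destroyed peak sits at an even lattice point, so $\sump(\pi)\equiv\sump(\pi')$ and $\sump(\Phi_3(\pi))\equiv\sump(\Phi_2(\pi'))\pmod 2$, and the parity reversal is still inherited from Proposition \ref{pro:parity-reversing-4n+2}; in Case 1 your claim is fine, since a trailing north step is never part of a peak. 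Replace the blanket statement by this parity argument and the proof is complete.
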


\noindent
\emph{Proof of (iii) of Theorem \ref{thm:ARM-identity-for-lead-InA321}.}
\begin{align*}
\sum_{\sigma\in\I_{4n+3}(321)} (-1)^{\maj(\sigma)} q^{\lead(\sigma)} &= \sum_{i=0}^{2n+1} \left( \sum_{\pi\in\B_i(2n+1,2n+2)} (-1)^{\sump(\sigma)} \right)q^{i+1} \\
  &=\sum_{j=0}^{n} \left(\sum_{\pi\in\F_{2j}(2n+1,2n+2)}q^{2j+1}- \sum_{\pi\in\F_{2j+1}(2n+1,2n+2)}q^{2j+2} \right) \\
  &= \sum_{j=0}^{n} 2|\B_j(n,n+1)|q^{2j+1}- \sum_{j=0}^{n} |\B_j(n,n+1)|q^{2j+2} \\
  &= \left(\frac{2}{q}-1 \right) \sum_{\sigma\in\I_{2n+1}(321)} q^{2\cdot\lead(\sigma)}.
\end{align*}

\smallskip
\subsection{The case $\Phi_4:\B(2n,2n+1)\rightarrow\B(2n,2n+1)$} To prove (iv) of Theorem \ref{thm:ARM-identity-for-lead-InA321}, we shall establish a $\sump$-parity-reversing involution  $\Phi_4:\B_i(2n,2n+1)\rightarrow\B_i(2n,2n+1)$ for $0\le i\le 2n$. Let $\F(2n,2n+1)\subseteq\B(2n,2n+1)$ be the set of fixed points of the map $\Phi_4$.  For $0\le i\le 2n$, we define
\[
\F_i(2n,2n+1)=\F(2n,2n+1)\cap\B_i(2n,2n+1).
\]
The set $\F(2n,2n+1)$ can be constructed from $\B(n,n+1)$ as follows.
For each path $\omega\in\B_j(n,n+1)$ ($0\le j\le n$), we form a path $\gamma(\omega)$  by duplicating every step of $\omega$. We consider the following two cases according to the last step $z$ of $\omega$:
\begin{itemize}
   \item $z=\E$. Then the last two steps of $\gamma(\omega)$ are east steps. Let $\varphi_0(\omega)$ be the path obtained from $\gamma(\omega)$ by removing the last step.
   \item $z=\N$. Then the last two steps of $\gamma(\omega)$ are north steps. Factorize $\gamma(\omega)$ as $\N^{2j}\E\E\beta\N\N$ and let $\varphi_1(\omega)$ (respectively, $\varphi_2(\omega)$) be the path obtained from $\gamma(\omega)$ by inserting a north step after (respectively, before)  the first east step and then removing the second east step and the last step, i.e.,
\begin{align*}
\varphi_1(\omega) &=\N^{2j}\E\N\beta\N \\
\varphi_2(\omega) &=\N^{2j+1}\E\beta\N
\end{align*}
\end{itemize}
For example, for $\omega_1=\N\E\E\N\E\N\E\in\B_1(3,4)$, the path $\varphi_0(\omega_1)$ is shown as the left hand side of Figure \ref{fig:lead-duplicate-4}. For $\omega_2=\E\E\N\E\N\E\N\in\B_0(3,4)$, the paths $\varphi_1(\omega_2), \varphi_2(\omega_2)$ are shown as the right hand side of Figure \ref{fig:lead-duplicate-4}.

\begin{figure}[ht]
\begin{center}
\psfrag{vphi-0}[][b][1]{$\varphi_0$}
\psfrag{vphi-1}[][b][1]{$\varphi_1$}
\psfrag{vphi-2}[][b][1]{$\varphi_2$}
\includegraphics[width=4.0in]{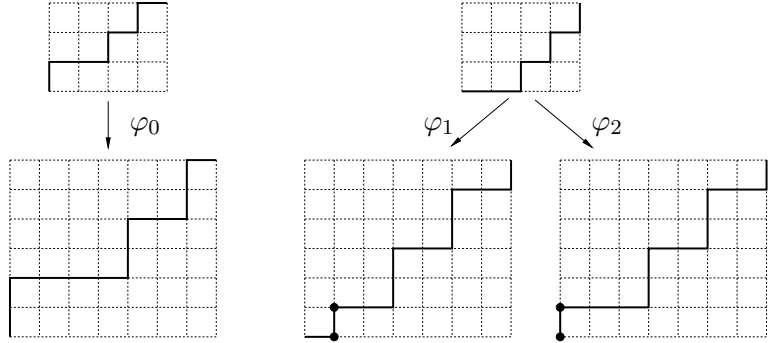}
\end{center}
\caption{\small Examples of the maps $\varphi_0, \varphi_1$ and $\varphi_2$.}
\label{fig:lead-duplicate-4}
\end{figure}

\medskip
We define the refinement of the set $\F(2n,2n+1)$. For $0\le j\le n$, let $\F_{2j}(2n,2n+1)=\F_{2j}^{E}(2n,2n+1)\cup\F_{2j}^{N}(2n,2n+1)$, where
\begin{align*}
\F_{2j}^{E}(2n,2n+1) &=\{\varphi_0(\omega): \omega\in\B_j(n,n+1) \mbox{ ends with an east step}\}, \\
\F_{2j}^{N}(2n,2n+1) &=\{\varphi_1(\omega): \omega\in\B_j(n,n+1) \mbox{ ends with a north step}\}, \\
\F_{2j+1}(2n,2n+1) &=\{\varphi_2(\omega): \omega\in\B_j(n,n+1) \mbox{ ends with a north step}\}.
\end{align*}

We have the following properties of the fixed points $\F(2n+1,2n+2)$.
\smallskip
\begin{lem} For $0\le j\le n$ and any path $\omega\in\B_j(n,n+1)$, the following properties hold.
\begin{enumerate}
\item The path $\varphi_0(\omega)$ contains no odd peaks and odd valleys.
\item The path $\varphi_1(\omega)$ contains a unique odd valley $(1,2j)$ and no odd peaks.
\item The path $\varphi_2(\omega)$ contains a unique odd peak $(0,2j+1)$ and no odd valleys.
\item $|\F_{2j}(2n+1,2n+2)|=|\B_j(n,n+1)|$.
\item $|\F_{2j+1}(2n+1,2n+2)|=|\B_j(n,n+1)|-|\B_j(n,n)|$.
\end{enumerate}
\end{lem}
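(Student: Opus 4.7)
The plan is to prove (i)--(iii) by a careful corner-parity analysis exploiting the fundamental fact that the duplicated path $\gamma(\omega)$ has every peak and every valley at an even lattice point (each corner of $\gamma(\omega)$ has coordinates exactly twice those of the corresponding corner of $\omega$). The counting assertions (iv) and (v) will then drop out by elementary bijective arguments.

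For (i), since $\omega$ ends with $\E$, the path $\gamma(\omega)$ ends with $\E\E$, and $\varphi_0(\omega)$ is obtained by deleting this terminal east step. Such a deletion only shifts the endpoint, leaving every interior corner of $\gamma(\omega)$ intact; no new corner is created and no existing corner changes parity. Hence every peak and valley of $\varphi_0(\omega)$ remains at an even lattice point.

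For (ii) and (iii) the analyses are parallel. The modified prefix contributes exactly one new corner: in $\varphi_1(\omega)=\N^{2j}\E\N\beta\N$, the pattern $\E\N$ at the point $(1,2j)$ creates a valley with coordinate sum $2j+1$, which is odd; in $\varphi_2(\omega)=\N^{2j+1}\E\beta\N$, the pattern $\N\E$ at $(0,2j+1)$ creates a peak with coordinate sum $2j+1$, also odd. These are the designated unique odd corners. In either construction, the block $\beta$, which originally ran from $(2,2j)$ to $(2n+2,2n-2)$ inside $\gamma(\omega)$, is placed so as to run from $(1,2j+1)$ to $(2n+1,2n-1)$; this is a translation by $(-1,+1)$, which preserves the parity of $x+y$ at every corner interior to $\beta$. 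A short case check at the two junctions of $\beta$ with the surrounding steps (according to whether the first and last step of $\beta$ is $\N$ or $\E$) shows that whenever a junction does produce a corner, that corner has even coordinate sum. This will confirm that the only odd corner of $\varphi_1(\omega)$ is the valley at $(1,2j)$ and that the only odd corner of $\varphi_2(\omega)$ is the peak at $(0,2j+1)$.

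For (iv), the sets $\F_{2j}^{E}(2n,2n+1)$ and $\F_{2j}^{N}(2n,2n+1)$ partition $\F_{2j}(2n,2n+1)$ according to whether the underlying $\omega\in\B_j(n,n+1)$ ends with $\E$ or $\N$, yielding $|\F_{2j}(2n,2n+1)|=|\B_j(n,n+1)|$. For (v), the map $\omega\mapsto\omega\E$ provides a bijection between $\B_j(n,n)$ and the subset of $\B_j(n,n+1)$ of paths ending with $\E$; hence the paths in $\B_j(n,n+1)$ ending with $\N$ number $|\B_j(n,n+1)|-|\B_j(n,n)|$, which is $|\F_{2j+1}(2n,2n+1)|$. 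The main obstacle is the junction check in (ii) and (iii): while the interior of $\beta$ is handled uniformly by the parity-preserving shift, one must verify in each configuration of the endpoint steps of $\beta$ that no additional odd corner sneaks in beyond the one produced by the modified prefix.
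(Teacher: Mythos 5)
Your proposal is correct and follows essentially the argument the paper intends: the paper states this lemma without proof as an immediate observation, relying on exactly the facts you spell out, namely that every corner of the duplicated path $\gamma(\omega)$ is an even lattice point, that the block $\beta$ is shifted by $(-1,+1)$ (parity-preserving) in $\varphi_1,\varphi_2$ with junction corners landing at the even points $(1,2j+1)$ and $(2n+1,2n-1)$, and that the counts follow from the end-step dichotomy together with the bijection $\omega\mapsto\omega\E$ between $\B_j(n,n)$ and the paths of $\B_j(n,n+1)$ ending with an east step. Note also that you correctly read the sets in (iv)--(v) as $\F_{2j}(2n,2n+1)$ and $\F_{2j+1}(2n,2n+1)$, the paper's ``$(2n+1,2n+2)$'' there being a typo carried over from the $\Phi_3$ case.
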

It follows that $\sump(\varphi_0(\omega)),\sump(\varphi_1(\omega))$ are even and $\sump(\varphi_2(\omega))$ is odd.
Now, we construct the involution $\Phi_4$ on $\B(2n,2n+1)$.

\smallskip
\noindent
{\bf Algorithm D}

Given a path $\pi\in\B(2n,2n+1)$, let $z$ denote the last step of $\pi$. Let $\pi'$ be the path obtained from $\pi$ by removing $z$.  We consider the following two cases according to the step $z$.

Case 1.  $z=\E$. Then $\pi'$ goes from $(0,0)$ to $(2n,2n)$. By algorithm A, we determine the path $\Phi_1(\pi')\in\B(2n,2n)$ associated with $\pi'$ under the map $\Phi_1$. Then the corresponding path $\Phi_4(\pi)$ is obtained from $\Phi_1(\pi')$ by appending an east step in the end, i.e., $\Phi_4(\pi)=\Phi_1(\pi')\E$.

Case 2. $z=\N$. Then $\pi'$ goes from $(0,0)$ to $(2n+1,2n-1)$. By the same method as in algorithm B, we determine the path $\Phi_2(\pi')\in\B(2n-1,2n+1)$ associated with $\pi'$ under the map $\Phi_2$. Then the corresponding path $\Phi_4(\pi)\in\B(2n,2n+1)$ is obtained from $\Phi_2(\pi')$ by appending a north step in the end, i.e., $\Phi_4(\pi)=\Phi_2(\pi')\N$.

The following property of the map $\Phi_4$ can be proved by the same argument as in the proofs of Propositions \ref{pro:parity-reversing-4n} and \ref{pro:parity-reversing-4n+2} since the construction of $\Phi_4$ in Case 1 (respectively, Case 2) of algorithm D is similar to the construction of $\Phi_1$ (respectively, $\Phi_2$).

\begin{pro} \label{pro:parity-reversing-4n+1} For $0\le i\le 2n$, the map $\Phi_4$ establishes a refined involution on $\B_i(2n,2n+)-\F_i(2n,2n+1)$. Moreover, a path $\pi$ is carried to a path $\Phi_4(\pi)$ such that $\sump(\Phi_4(\pi))$ has the opposite parity of $\sump(\pi)$.
\end{pro}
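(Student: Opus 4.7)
The plan is to reduce the proof to the already-established Propositions~\ref{pro:parity-reversing-4n} and~\ref{pro:parity-reversing-4n+2}, exploiting the fact that Algorithm~D manifestly preserves the last step $z$ of $\pi$ and acts on the truncated path $\pi'$ by either $\Phi_1$ or the analogue of $\Phi_2$ on the rectangle $\B(2n-1,2n+1)$. Consequently, involutivity, the refinement by the initial segment $\mu_0$, and the $\sump$-parity-reversing property all transfer from $\Phi_1$ and $\Phi_2$ to $\Phi_4$.

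First I would verify that $\Phi_4$ is a well-defined map from $\B_i(2n,2n+1)-\F_i(2n,2n+1)$ to itself. In Case~1 ($z=\E$), removing $z$ yields $\pi'\in\B_i(2n,2n)$ whose initial segment $\mu_0$ coincides with that of $\pi$; by Lemma~\ref{lem:preserve-mu0} the map $\Phi_1$ preserves $\mu_0$, so re-appending $\E$ produces a path still in $\B_i(2n,2n+1)$. In Case~2 ($z=\N$), the same reasoning applies to $\pi'\in\B(2n-1,2n+1)$, once one observes that Algorithm~B uses only the primal factorization and the parities of its segment lengths, neither of which requires the number of $\N$-steps to equal the number of $\E$-steps.

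Next I would match the fixed points: $\pi\in\F_i(2n,2n+1)$ if and only if $\pi'$ is a fixed point of $\Phi_1$ (in Case~1) or of the generalized $\Phi_2$ (in Case~2). In Case~1, the family $\F_{2j}^{E}(2n,2n+1)=\{\varphi_0(\omega):\omega\in\B_j(n,n+1)\}$ consists exactly of the paths whose truncation has no odd peaks and no odd valleys, i.e.\ lies in $\F(2n,2n)$. In Case~2, the two subfamilies $\{\varphi_1(\omega)\}$ and $\{\varphi_2(\omega)\}$ correspond, under removal of the final $\N$, to the two types of fixed paths described in Lemma~\ref{lem:unique-odd}. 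Hence $\pi\notin\F_i(2n,2n+1)$ forces $\pi'$ to be moved by the relevant map, and in particular $\Phi_4(\pi)\neq\pi$.

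Finally, involutivity of $\Phi_4$ is inherited directly from that of $\Phi_1$ and $\Phi_2$, since removing and then re-appending the same step $z$ commutes with the local swap performed by those maps. For the parity claim, Propositions~\ref{pro:parity-reversing-4n} and~\ref{pro:parity-reversing-4n+2} show that this single swap of adjacent $\N$ and $\E$ steps exchanges exactly one odd peak of $\pi'$ for an odd valley (or vice versa), while the last step $z$ contributes identically to $\sump(\pi)$ and $\sump(\Phi_4(\pi))$; so the two sump-values differ in parity. The main technical point I expect to dwell on is the extension of Algorithm~B to the non-square rectangle $\B(2n-1,2n+1)$ together with the identification of its fixed points with the $\varphi_1$- and $\varphi_2$-families; once this is established, every remaining step is a direct transcription of the arguments used for $\Phi_1$ and $\Phi_2$.
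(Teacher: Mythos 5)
Your proposal is correct and follows essentially the same route as the paper, which proves Proposition~\ref{pro:parity-reversing-4n+1} precisely by observing that Case~1 and Case~2 of Algorithm~D reduce to the arguments for $\Phi_1$ (Proposition~\ref{pro:parity-reversing-4n}) and for the $\Phi_2$-type map on the non-square rectangle (Proposition~\ref{pro:parity-reversing-4n+2}), with the fixed points identified via the $\varphi_0,\varphi_1,\varphi_2$ families. One small imprecision: in Case~1 the map $\Phi_1$ may change the last step of $\pi'$, so re-appending the final $\E$ can create or destroy a peak at the endpoint $(2n,2n)$ of $\pi'$; thus the last step does not contribute \emph{identically} to $\sump(\pi)$ and $\sump(\Phi_4(\pi))$, but since $(2n,2n)$ is an even lattice point the discrepancy is even and the parity-reversing conclusion is unaffected.
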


\noindent
\emph{Proof of (iv) of Theorem \ref{thm:ARM-identity-for-lead-InA321}.}
\begin{align*}
\sum_{\sigma\in\I_{4n+1}(321)} (-1)^{\maj(\sigma)} q^{\lead(\sigma)} &= \sum_{i=0}^{2n} \left( \sum_{\pi\in\B_i(2n,2n+1)} (-1)^{\sump(\sigma)} \right)q^{i+1} \\
  &= \sum_{j=0}^{n} \left(\sum_{\pi\in\F_{2j}(2n,2n+1)}q^{2j+1}- \sum_{\pi\in\F_{2j+1}(2n,2n+1)}q^{2j+2} \right) \\
  &= \sum_{j=0}^{n} \big(|\B_j(n,n+1)|- |\B_j(n,n+1)|q+|\B_j(n,n)|q\big)q^{2j+1} \\
  &= \left(\frac{1}{q}-1 \right) \sum_{\sigma\in\I_{2n+1}(321)} q^{2\cdot\lead(\sigma)}+\sum_{\sigma\in\I_{2n}(321)} q^{2\cdot\lead(\sigma)}.
\end{align*}

\section{Analogous results for 123-avoiding involutions}
It is known that by the Robinson-Schensted-Knuth (RSK) algorithm an involution $\sigma\in\I_n(321)$ (respectively, $\sigma\in\I_n(123)$) is associated with a pair $(Q,Q)$ of identical $n$-cell standard Young tableaux with at most two rows (respectively, columns). Let $Q^{\mathtt T}$ be the transpose of $Q$ and let $\sigma^{\mathtt T}$ be the preimage of the pair $(Q^{\mathtt T},Q^{\mathtt T})$ under the RSK correspondence. Then $\sigma\leftrightarrow\sigma^{\mathtt T}$ is a bijection between $\I_n(321)$ and $\I_n(123)$.

\begin{lem} \label{lem: Q-transpose} We have 
\[
\Des(\sigma^{\mathtt T})=\{i: i\not\in \Des(\sigma), 1\le i\le n-1\}.
\]
\end{lem}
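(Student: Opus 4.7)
The plan is to use the Robinson–Schensted correspondence to translate the statement about descents of $\sigma$ and $\sigma^{\mathtt T}$ into a statement about the single tableau $Q$ and its transpose $Q^{\mathtt T}$, and then to reduce it to a simple geometric observation about how the cells containing $i$ and $i+1$ can be positioned in a standard Young tableau.

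First I would recall the well-known RSK descent rule: for any permutation $\sigma$ corresponding to the pair $(P,Q)$ under RSK, one has $i\in\Des(\sigma)$ if and only if $i+1$ lies in a strictly lower row than $i$ in the recording tableau $Q$. Since the involutions $\sigma$ and $\sigma^{\mathtt T}$ correspond to the pairs $(Q,Q)$ and $(Q^{\mathtt T},Q^{\mathtt T})$ respectively, the lemma reduces to showing that, for each $1\le i\le n-1$, exactly one of the following two conditions holds:
\begin{enumerate}
\item $i+1$ lies in a strictly lower row than $i$ in $Q$;
\item $i+1$ lies in a strictly lower row than $i$ in $Q^{\mathtt T}$, equivalently $i+1$ lies in a strictly greater column than $i$ in $Q$.
\end{enumerate}

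The key step is then the following claim about any standard Young tableau $Q$: writing $(r_j,c_j)$ for the cell containing $j$, exactly one of $r_{i+1}>r_i$ and $c_{i+1}>c_i$ holds. To see that they cannot both hold, suppose $r_{i+1}>r_i$ and $c_{i+1}>c_i$. By the Young diagram property the cell $(r_i,c_{i+1})$ lies in $Q$, and by row/column monotonicity its entry is strictly greater than $i$ and strictly less than $i+1$, which is impossible. To see that at least one holds, note that $i\ne i+1$ forces $(r_{i+1},c_{i+1})\ne(r_i,c_i)$, and the options $r_{i+1}<r_i$ with $c_{i+1}\le c_i$, or $r_{i+1}\le r_i$ with $c_{i+1}<c_i$, each place a larger entry in a position forced to contain a smaller one, again by row/column monotonicity.

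With the claim in hand, since transposition sends row-indices to column-indices, condition (ii) above is precisely the claim's alternative to (i). Hence for every $i$ in $\{1,\dots,n-1\}$ exactly one of $i\in\Des(\sigma)$ and $i\in\Des(\sigma^{\mathtt T})$ holds, giving the asserted complementation. The main (and only non-trivial) obstacle is the tableau claim in the previous paragraph, but the Young diagram argument settles it cleanly.
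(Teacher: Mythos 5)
Your proposal is correct and follows essentially the same route as the paper: both reduce the statement, via the RSK descent rule applied to the recording tableau, to the fact that for consecutive entries $i,i+1$ in a standard Young tableau exactly one of ``strictly lower row'' and ``strictly greater column'' holds, so that transposing $Q$ complements its descent set. The only difference is cosmetic: the paper argues the tableau fact by a case analysis tailored to the two-row shape of $Q$, while you prove it for an arbitrary shape via the ``no entry strictly between $i$ and $i+1$'' contradiction, which is fine.
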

\begin{proof}
  It is known that a descent $\sigma_i>\sigma_{i+1}$ in $\sigma$ is translated to the `descent' of the recording tableau $Q$ that the entry $i+1$ is in a row lower than the row of $i$. For $1\le i\le n-1$, if $i\in\Des(\sigma)$ then $i$ (respectively, $i+1$) is in the first (respectively, second) row in $Q$. Then $i+1$ is either in the same column as $i$ or in a column to the left of the column of $i$ in $Q$.  
  Then $i+1$ is not in a lower row than the row of $i$ in $Q^{\mathtt T}$. Hence $i\not\in\Des(\sigma^{\mathtt T})$.
  
  Otherwise, $i\not\in\Des(\sigma)$. Then $i+1$ is not in a row lower than the row of $i$ in $Q$. The element $i$ is in the first row or the second row. In either case, the element $i+1$ is in a column to the right of the column of $i$. Then $i+1$ is in row lower than the row of $i$ in $Q^{\mathtt T}$. Hence $i\in\Des(\sigma^{\mathtt T})$.
\end{proof}

We obtain the joint distribution of major index and descent number for 123-avoiding involutions.

\begin{cor} We have
\[
\sum_{{\sigma\in\I_n(123)}\atop{\des(\sigma)=n-1-k}} q^{\maj(\sigma)}
    =q^{{{n}\choose{2}}+k^2-nk}{{\lceil\frac{n}{2}\rceil}\brack{k}}_q {{\lfloor\frac{n}{2}\rfloor}\brack{k}}_q.
\]
\end{cor}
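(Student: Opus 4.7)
The plan is to reduce the statement to (i) of Theorem \ref{thm:joint-distribution} by means of the RSK-transpose bijection $\sigma\leftrightarrow\sigma^{\mathtt T}$ between $\I_n(321)$ and $\I_n(123)$ together with Lemma \ref{lem: Q-transpose}.

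First, I would observe that Lemma \ref{lem: Q-transpose} gives $\Des(\sigma^{\mathtt T})=\{1,\dots,n-1\}\setminus\Des(\sigma)$, hence for every $\sigma\in\I_n(321)$,
\[
\des(\sigma^{\mathtt T})=n-1-\des(\sigma),\qquad \maj(\sigma^{\mathtt T})=\binom{n}{2}-\maj(\sigma).
\]
Consequently, writing $j=n-1-k$ and summing over $\sigma\in\I_n(321)$ with $\des(\sigma)=k$,
\[
\sum_{\substack{\sigma^{\mathtt T}\in\I_n(123)\\ \des(\sigma^{\mathtt T})=j}} q^{\maj(\sigma^{\mathtt T})}
 =q^{\binom{n}{2}}\sum_{\substack{\sigma\in\I_n(321)\\ \des(\sigma)=k}} q^{-\maj(\sigma)}.
\]

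Next, I would apply (i) of Theorem \ref{thm:joint-distribution} with $q$ replaced by $1/q$. Using the standard identity
\[
{{n}\brack{k}}_{1/q}=q^{-k(n-k)}{{n}\brack{k}}_q,
\]
the right hand side of the display above becomes
\[
q^{\binom{n}{2}}\cdot q^{-k^{2}}\cdot q^{-k(\lceil n/2\rceil-k)-k(\lfloor n/2\rfloor-k)}{{\lceil\frac{n}{2}\rceil}\brack{k}}_{q}{{\lfloor\frac{n}{2}\rfloor}\brack{k}}_{q}.
\]

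Finally, collecting the exponents of $q$ yields $\binom{n}{2}-k^{2}-k(n-2k)=\binom{n}{2}+k^{2}-nk$, matching exactly the claimed prefactor. Since the bijection is a bijection, summing over $\sigma^{\mathtt T}\in\I_n(123)$ with $\des(\sigma^{\mathtt T})=n-1-k$ is the same as summing over $\sigma\in\I_n(321)$ with $\des(\sigma)=k$, and the corollary follows. There is no real obstacle; the only subtlety is the careful bookkeeping of the exponent after the $q\mapsto 1/q$ substitution in the $q$-binomial coefficients.
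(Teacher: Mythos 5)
Your proposal is correct and follows essentially the same route as the paper: the RSK-transpose bijection with Lemma \ref{lem: Q-transpose} giving $\des(\sigma^{\mathtt T})=n-1-\des(\sigma)$ and $\maj(\sigma^{\mathtt T})=\binom{n}{2}-\maj(\sigma)$, followed by the substitution $q\mapsto 1/q$ in (i) of Theorem \ref{thm:joint-distribution} and the symmetry ${{n}\brack{k}}_{1/q}=q^{-k(n-k)}{{n}\brack{k}}_q$. The exponent bookkeeping $\binom{n}{2}-k^{2}-k(n-2k)=\binom{n}{2}+k^{2}-nk$ is exactly the computation in the paper, so nothing further is needed.
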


\begin{proof} Substituting $q^{-1}$ for $q$ in (i) of Theorem \ref{thm:joint-distribution}, we have
\[
\sum_{{\sigma\in\I_n(321)}\atop{\des(\sigma)=k}} q^{-\maj(\sigma)}=q^{-k^2}{{\lceil\frac{n}{2}\rceil}\brack{k}}_{q^{-1}} {{\lfloor\frac{n}{2}\rfloor}\brack{k}}_{q^{-1}}
    =q^{k^2-kn}{{\lceil\frac{n}{2}\rceil}\brack{k}}_q {{\lfloor\frac{n}{2}\rfloor}\brack{k}}_q.
\]
By Lemma \ref{lem: Q-transpose}, $\des(\sigma)=n-1-\des(\sigma^{\mathtt T})$ and $\maj(\sigma)={{n}\choose{2}}-\maj(\sigma^{\mathtt T})$  for any $\sigma\in\I_n(123)$, we have
\begin{align*}
\sum_{{\sigma\in\I_n(123)}\atop{\des(\sigma)=n-1-k}} q^{\maj(\sigma)}
    &= \sum_{{\sigma^{\mathtt T}\in\I_n(321)}\atop{\des(\sigma^{\mathtt T})=k}} q^{{{n}\choose{2}}-\maj(\sigma^{\mathtt T})}\\
    &=q^{{{n}\choose{2}}+k^2-nk}{{\lceil\frac{n}{2}\rceil}\brack{k}}_q {{\lfloor\frac{n}{2}\rfloor}\brack{k}}_q,\end{align*}
as required.
\end{proof}

With the bijection $\sigma\leftrightarrow\sigma^{\mathtt T}$  between $\I_n(321)$ and $\I_n(123)$, we can prove affirmatively Conjecture \ref{con:max-des-identity-for-InA123}. In fact, this result is essentially equivalent to Theorem \ref{thm:ARM-identity-for-InA321}.

\smallskip
\begin{thm} \label{thm:max-des-identity-for-InA123} For all $n\ge 1$, we have
\begin{enumerate}
\item ${\displaystyle
\sum_{\sigma\in\I_{4n}(123)} (-1)^{\maj(\sigma)}q^{\des(\sigma)} =q\sum_{\sigma\in\I_{2n}(123)}q^{2\cdot\des(\sigma)},
}$
\item ${\displaystyle
\sum_{\sigma\in\I_{4n+2}(123)} (-1)^{\maj(\sigma)}q^{\des(\sigma)} =(1-q)q^2\sum_{\sigma\in\I_{2n}(123)}q^{2\cdot\des(\sigma)},
}$
\item ${\displaystyle
\sum_{\sigma\in\I_{2n+1}(123)} (-1)^{\maj(\sigma)}q^{\des(\sigma)}=(-1)^nq^2\sum_{\sigma\in\I_{n}(123)}q^{2\cdot\des(\sigma)}.
}$
\end{enumerate}
\end{thm}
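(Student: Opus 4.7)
The plan is to deduce the three identities directly from Theorem \ref{thm:ARM-identity-for-InA321} of Eu–Fu–Pan–Ting by transporting them across the bijection $\sigma \leftrightarrow \sigma^{\mathtt T}$ between $\I_n(321)$ and $\I_n(123)$, exactly as was done for the joint distribution corollary just proved. The key input is Lemma \ref{lem: Q-transpose}: for any $\sigma \in \I_n(123)$ and its preimage $\tau = \sigma^{\mathtt T} \in \I_n(321)$, the descent sets $\Des(\sigma)$ and $\Des(\tau)$ are complementary in $\{1,2,\ldots,n-1\}$. Consequently
\[
\des(\sigma) = n-1-\des(\tau), \qquad \maj(\sigma) = \binom{n}{2} - \maj(\tau),
\]
so $(-1)^{\maj(\sigma)} = (-1)^{\binom{n}{2}}(-1)^{\maj(\tau)}$.

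First I would write the master identity
\[
\sum_{\sigma \in \I_n(123)} (-1)^{\maj(\sigma)} q^{\des(\sigma)}
 \;=\; (-1)^{\binom{n}{2}}\, q^{n-1} \sum_{\tau \in \I_n(321)} (-1)^{\maj(\tau)} q^{-\des(\tau)},
\]
obtained by reindexing via the bijection. Substituting $q \mapsto q^{-1}$ in the three parts of Theorem \ref{thm:ARM-identity-for-InA321} then evaluates the inner sum on the right in each case, up to an overall $q$-power coming from $q^{n-1}$.

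Next I would handle the three parities of $n$ separately, keeping track of $(-1)^{\binom{n}{2}}$:
\[
(-1)^{\binom{4n}{2}} = 1, \qquad (-1)^{\binom{4n+2}{2}} = -1, \qquad (-1)^{\binom{2n+1}{2}} = (-1)^n.
\]
In case (i), $n=4n$, the master identity together with Theorem \ref{thm:ARM-identity-for-InA321}(i) gives
$q^{4n-1}\sum_{\tau \in \I_{2n}(321)} q^{-2\des(\tau)}$, and reindexing the target sum on the right of the claim through $\sigma \leftrightarrow \sigma^{\mathtt T}$ for $\I_{2n}$ yields the same expression. Case (ii) is identical in structure but picks up the sign $-1$ and uses Theorem \ref{thm:ARM-identity-for-InA321}(ii), which produces the factor $(1-q^{-1})$; after multiplication by $-q^{4n+1}$ this rearranges to $(1-q)q^{4n}\sum_{\tau\in\I_{2n}(321)} q^{-2\des(\tau)}$, which matches $(1-q)q^{2}\sum_{\sigma\in\I_{2n}(123)}q^{2\des(\sigma)}$. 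Case (iii) uses Theorem \ref{thm:ARM-identity-for-InA321}(iii) with the sign $(-1)^n$, producing $(-1)^n q^{2n}\sum_{\tau\in\I_n(321)} q^{-2\des(\tau)}$, which equals $(-1)^n q^2 \sum_{\sigma \in \I_n(123)} q^{2\des(\sigma)}$ after reindexing.

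There is no real obstacle; the proof is a careful but routine bookkeeping of the sign $(-1)^{\binom{n}{2}}$ and the $q$-degree shift $q^{n-1}$ that arise from the complementation of descent sets. Writing the argument as the single master identity followed by three parallel one-line verifications keeps it short and makes transparent why Conjecture \ref{con:max-des-identity-for-InA123} is \emph{equivalent} to Theorem \ref{thm:ARM-identity-for-InA321} rather than a genuinely new statement.
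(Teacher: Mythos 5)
Your proposal is correct and follows essentially the same route as the paper: transport each identity of Theorem \ref{thm:ARM-identity-for-InA321} through the RSK-transpose bijection between $\I_n(321)$ and $\I_n(123)$, using the descent-set complementation $\des(\sigma)=n-1-\des(\sigma^{\mathtt T})$, $\maj(\sigma)=\binom{n}{2}-\maj(\sigma^{\mathtt T})$, and keep track of the sign $(-1)^{\binom{n}{2}}$ and the power shift $q^{n-1}$. Your sign and degree bookkeeping in all three cases checks out, so no gap remains.
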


\begin{proof} Note that $\des(\sigma)=n-1-\des(\sigma^{\mathtt T})$ and $\maj(\sigma)={{n}\choose{2}}-\maj(\sigma^{\mathtt T})$  for any $\sigma\in\I_n(123)$.
Making use of the identity in (i) of Theorem \ref{thm:ARM-identity-for-InA321}, we have 
\begin{align*}
\sum_{\sigma\in\I_{4n}(123)} (-1)^{\maj(\sigma)}q^{\des(\sigma)}
    &= \sum_{\sigma^{\mathtt T}\in\I_{4n}(321)} (-1)^{{{4n}\choose{2}}-\maj(\sigma^{\mathtt T})}q^{4n-1-\des(\sigma^{\mathtt T})} \\
    &= q\sum_{\sigma^{\mathtt T}\in\I_{2n}(321)} q^{2(2n-1-\des(\sigma^{\mathtt T}))} \quad\quad\mbox{(by (i) of Theorem \ref{thm:ARM-identity-for-InA321})}\\
    &= q\sum_{\sigma\in\I_{2n}(123)} q^{2\cdot\des(\sigma)}.
\end{align*}
The assertion (i) follows.
Making use of the the identities in (ii) and (iii) of Theorem \ref{thm:ARM-identity-for-InA321}, the assertions (ii) and (iii) can be proved straightforward in the same manner.
\end{proof}

\end{document}